\documentclass[11pt,reqno]{amsart}

\usepackage[a4paper,centering,left=3cm,right=3.2cm]{geometry}

\usepackage{amsfonts,amscd,amssymb,amsmath,amsthm,mathrsfs,multirow,xcolor,lscape,longtable,pbox,lipsum,afterpage,graphicx,threeparttable,comment}
\definecolor{mybg}{RGB}{255,255,200} 
\usepackage[thinlines]{easytable}

\usepackage[colorlinks,linkcolor=blue,anchorcolor=blue,citecolor=blue,urlcolor=blue,backref=page]{hyperref}
\usepackage{enumerate}
\usepackage{graphicx} 
\usepackage{setspace} 
\setdisplayskipstretch{1.8} 
\usepackage{enumitem} 
\setitemize{itemsep=3pt, parsep=0pt, leftmargin=20pt, label=$-$}
\setenumerate{itemsep=3pt, parsep=3pt, leftmargin=25pt, label=$-$}

\usepackage{shadowtext} 

\usepackage{microtype}

\usepackage{bm}
\usepackage{bbm}
\usepackage{changepage}
\usepackage{mathtools}
\usepackage{pdfpages} 

\usepackage{LibreBodoni} 

\usepackage{OldStandard} 

\usepackage[autostyle, english = american]{csquotes}
\usepackage[english]{babel}
\MakeOuterQuote{"}

\usepackage[all,cmtip]{xy}	
\usepackage{tikz}			
\usetikzlibrary{arrows} 		
\usetikzlibrary{matrix}		
\usetikzlibrary{calc}		
							
\newtheoremstyle{updog}
  {10pt plus 4pt minus 4pt}
  {10pt plus 4pt minus 4pt}
  {\slshape}
  {}
  {\librebodoni}
  {.}
  {5pt}
  {}

\newtheoremstyle{defstyle}
  {10pt plus 4pt minus 4pt}
  {10pt plus 4pt minus 4pt}
  {}
  {}
  {\librebodoni}
  {.}
  {5pt}
  {}

\newtheoremstyle{remarkstyle}
  {10pt plus 4pt minus 4pt}
  {10pt plus 4pt minus 4pt}
  {}
  {}
  {\librebodoni \slshape}
  {.}
  {5pt}
  {}

\theoremstyle{updog}
\newtheorem{theorem}{Theorem}
\newtheorem{proposition}[theorem]{Proposition}
\newtheorem{lemma}[theorem]{Lemma}
\newtheorem{corollary}[theorem]{Corollary}

\theoremstyle{defstyle}
\newtheorem{definition}[theorem]{Definition}

\theoremstyle{remarkstyle}
\newtheorem{remark}[theorem]{Remark}

\let\originalleft\left 
\let\originalright\right
\renewcommand{\left}{\mathopen{}\mathclose\bgroup\originalleft}
\renewcommand{\right}{\aftergroup\egroup\originalright}

\allowdisplaybreaks 

\usepackage{titlesec} 

\DeclareRobustCommand{\begcase}{\begin{cases}}
\DeclareRobustCommand{\endcase}{\end{cases}}
\DeclareRobustCommand{\begmat}{\begin{pmatrix}}
\DeclareRobustCommand{\endmat}{\end{pmatrix}}
\DeclareRobustCommand{\begsmat}{\lp \begin{smallmatrix}}
\DeclareRobustCommand{\endsmat}{\end{smallmatrix} \rp} 

\DeclareRobustCommand{\lp}{\left(}
\DeclareRobustCommand{\rp}{\right)}
\DeclareRobustCommand{\lbr}{\left\lbrace}
\DeclareRobustCommand{\rbr}{\right\rbrace}

\DeclareRobustCommand{\lep}{\bigl(}
\DeclareRobustCommand{\llep}{\Bigl(}
\DeclareRobustCommand{\lllep}{\biggl(}
\DeclareRobustCommand{\llllep}{\Biggl(}
\DeclareRobustCommand{\rip}{\bigr)}
\DeclareRobustCommand{\rrip}{\Bigr)}
\DeclareRobustCommand{\rrrip}{\biggr)}
\DeclareRobustCommand{\rrrrip}{\Biggr)}

\DeclareRobustCommand{\leb}{\bigl[}

\DeclareRobustCommand{\rib}{\bigr]}

\DeclareRobustCommand{\lebr}{\big\{}
\DeclareRobustCommand{\llebr}{\Big\{}
\DeclareRobustCommand{\lllebr}{\bigg\{}

\DeclareRobustCommand{\ribr}{\big\}}
\DeclareRobustCommand{\rribr}{\Big\}}
\DeclareRobustCommand{\rrribr}{\bigg\}}

\DeclareRobustCommand{\lev}{\big|}
\DeclareRobustCommand{\llev}{\Big|}
\DeclareRobustCommand{\lllev}{\bigg|}
\DeclareRobustCommand{\llllev}{\Bigg|}
\DeclareRobustCommand{\riv}{\big|}
\DeclareRobustCommand{\rriv}{\Big|}
\DeclareRobustCommand{\rrriv}{\bigg|}
\DeclareRobustCommand{\rrrriv}{\Bigg|}

\DeclareRobustCommand{\leV}{\big\|}

\DeclareRobustCommand{\riV}{\big\|}

\DeclareRobustCommand{\dd}{\mathrm{d}}

\DeclareRobustCommand{\im}{\mathrm{Im}}
\DeclareRobustCommand{\re}{\mathrm{Re}}
\DeclareRobustCommand{\vol}{\operatorname{Vol}}
\DeclareRobustCommand{\sumprime}{\sideset{}{'}\sum}

\DeclareRobustCommand{\prim}{\mathrm{pr}}

\DeclareRobustCommand{\breh}{\\[0.5em]}
\DeclareRobustCommand{\bro}{\\[0.8em]}
\DeclareRobustCommand{\bre}{\\[1em]}

\DeclareRobustCommand{\smrt}[1]{\smash{\sqrt{#1}}}
\DeclareRobustCommand{\sfr}[2]{\raisebox{1.5pt}{{\scalebox{0.8}{$\tfrac{#1}{#2}$}}}}
\DeclareRobustCommand{\fr}[2]{\raisebox{0.7pt}{{\scalebox{0.8}{$\displaystyle \frac{#1}{#2}$}}}}

\DeclareRobustCommand{\C}{\mathbb{C}}

\DeclareRobustCommand{\hyp}{\mathbb{H}}

\DeclareRobustCommand{\R}{\mathbb{R}}

\DeclareRobustCommand{\Z}{\mathbb{Z}}

\DeclareRobustCommand{\SL}{\mathrm{SL}}

\DeclareRobustCommand{\0}{{\bf 0}}
\DeclareRobustCommand{\one}{{\bf 1}}

\DeclareRobustCommand{\bfu}{{\bf u}}
\DeclareRobustCommand{\bfv}{{\bf v}}
\DeclareRobustCommand{\bfw}{{\bf w}}

\DeclareRobustCommand{\calB}{\mathcal{B}}

\newcommand{\cusp}{\mathfrak{c}}
\newcommand{\epstein}{{\mathrm Z}}

\newcommand{\difform}[1]{\smash{\widetilde{#1}}}
\newcommand{\bital}[1]{\textbf{\textit{#1}}}

\let\subsubsection\paragraph
\newcommand{\periodafter}[1]{#1.}

\titleformat{\section}
  [block]             
  {\bfseries\librebodoni\centering} 
  {\bfseries\thesection.}       
  {4pt}               
  {}                  
  []                  

\titleformat{\subsection}
  [block]
  {\bfseries\librebodoni}
  {\textbf{\thesubsection.}}
  {1ex}
  {}            

\titlespacing*{\subsection}
  {0pt}   
  {3ex}   
  {1ex}   
  
\titleformat{\paragraph}{\bfseries\scshape}{\thesubsubsection.}{1em}{\periodafter}

\makeatletter
\def\@settitle{%
  \begin{center}%
    \baselineskip\p@%
    \librebodoni\bfseries\large 
    \uppercasenonmath\@title 
    \@title 
  \end{center}%
}
\makeatother   

\title[Spectral $L^2$-Bounds on Eisenstein Series]{Spectral $\boldsymbol{L^2}$-Bounds on Eisenstein Series and \\ Diophantine Properties of Quadratic Forms}

\author{Jitendra Bajpai} 
\address{Department of Mathematics, Christian-Albrechts University of Kiel, 24118 Kiel, Germany and Einstein Institute of Mathematics, Edmond J. Safra Campus, The Hebrew University of Jerusalem, Jerusalem 9190401, Israel}
\email{jitendra@math.uni-kiel.de}

\author{Kristian Holm}
\address{Department of Mathematics, Christian-Albrechts University of Kiel, 24118 Kiel, Germany}
\email{holm@math.uni-kiel.de}

\date{\today}
\subjclass[2020]{Primary 11F12; Secondary 11E45, 11J25}
\keywords{Eisenstein series, spectral bounds, Hecke congruence subgroups, Epstein zeta functions, quadratic forms, Diophantine approximation}

\makeatletter
\def\@tocline #1#2#3#4#5#6#7{%
  \ifnum #1>\c@tocdepth \else
    \par \addpenalty\@secpenalty \addvspace{#2}%
    \begingroup \hyphenpenalty\@M
    \@ifempty{#4}{%
      \@tempdima\csname r@tocindent\number#1\endcsname\relax
    }{%
      \@tempdima#4\relax
    }%
    \parindent\z@ \leftskip #3\relax \advance\leftskip\@tempdima\relax
    \rightskip\@pnumwidth plus4em \parfillskip-\@pnumwidth
    #5\leavevmode\hskip-\@tempdima
      \ifcase #1
       \or\or \hskip 1em \or \hskip 2em \else \hskip 3em \fi%
      #6\nobreak\relax
    \dotfill\hbox to\@pnumwidth{\@tocpagenum{#7}}\par
    \nobreak
    \endgroup
  \fi}
\makeatother

\makeatletter
\def\l@section{\@tocline{1}{2pt}{1pc}{1.5pc}{}}
\def\l@subsection{\@tocline{2}{2pt}{2.5pc}{2pc}{}}
\makeatother
\DisableLigatures{encoding = *, family = *}

\begin{document}
\thispagestyle{empty}

\begin{abstract}
We give $L^2$-bounds with respect to the height of the spectral parameter for Epstein zeta functions $\epstein_Q(s)$ and non-holomorphic Eisenstein series $E_\cusp (s,z)$ attached to the Hecke congruence subgroup $\Gamma_0 (p)$ of prime level $p$. Our bounds are completely explicit in the level and, additionally, constitute an almost sure logarithmic improvement over previously established bounds for the level-one Eisenstein series. Following the approach of Kelmer{\textendash}Kontorovich{\textendash}Lutsko, we combine the approximate functional equation for $\epstein_Q(s)$ with estimates for the number of pairs of lattice vectors at which $Q$ takes nearly equal values. Under a Diophantine hypothesis on $Q$, we sharpen these counting estimates using results of Eskin{\textendash}Margulis{\textendash}Mozes on the distribution of values of quadratic forms of signature $(2,2)$.
\end{abstract}

\maketitle
\renewcommand{\contentsname}{}   

\section{Introduction}
The relationship between quadratic forms and the spectral theory of automorphic forms provides an interesting bridge between classical and modern topics in number theory. For example, in the case of the non-holomorphic Eisenstein series $E_\Gamma(s,z)$ of $\Gamma = \SL_2 (\Z)$ with $z \in \Gamma \backslash \hyp$, this relationship boils down to the fact that the Eisenstein series can be written as 
\begin{align} \label{eq:Eisenstein series}
E_\Gamma(s,z) 
= 
\frac12 \sum_{\bfv\in\Z^2_{\mathrm{pr}}} \frac{\im (z)^s}{|v_1 z + v_2|^{2s}} 
=
\frac12\sum_{\bfv\in\Z^2_{\mathrm{pr}}}Q_z(\bfv)^{-s}
\qquad (\text{for } \re(s) > 1)
\end{align}
where $\bfv = (v_1, v_2)^\intercal$ runs over all primitive lattice points in $\Z^2$ (characterized by $\gcd(v_1,v_2) = 1$). Hence, the underlying quadratic form attached to $z$, and thus to $E_\Gamma (s,z)$, is the binary positive-definite form
\begin{align} \label{eq:quadratic_form_Qz}
Q_z (\bfv) 
&=
\frac{|v_1 z + v_2|^2}{\im(z)} 
=
\frac{1}{\im(z)} \llep v_1^2 |z|^2 + 2 v_1 v_2 \, \re(z) + v_2^2 \rrip.
\end{align}

The value distributions of binary quadratic forms $Q$ are rather subtle in general: the existence of a pair $(\bfv, \bfw) \in \Z^2$ of lattice vectors whose values $Q(\bfv), Q(\bfw)$ are separated by a small (but non-zero) gap corresponds to an almost-isotropic point of the quaternary form $\difform{Q} = Q \oplus (-Q)$ with signature $(2,2)$. This signature is a notorious exceptional case in the quantitative Oppenheim conjecture: indeed, the inherent complications of this class of forms are apparent in the seminal work \cite{EMM98}, and additionally exemplified by the existence of the complementary paper \cite{EMM05} which is explicitly devoted to quadratic forms of signature $(2,2)$. 

Concretely, without any assumptions on the positive-definite binary quadratic form $Q$, a general upper bound on the asymptotic size (as $X \rightarrow \infty$) of the set
\begin{align} \label{def:VQ_XY_set}\indent
V_Q (X, Y)
:=
\llebr (\bfv , \bfw) \in \lep \Z^2 \! \setminus \! \lbr \0 \rbr \rip^{\! 2} \, \rriv \, 
Q(\bfv), Q(\bfw) \leqslant X, \, \,
\lev Q(\bfv)-Q(\bfw) \riv \leqslant Y \rribr
\end{align}
contains an additional logarithmic factor in $X$, compared to ternary and higher-rank quadratic forms (cf. \cite[Thm. 2.2, Thm. 2.3]{EMM98}). As mentioned, later work of Eskin–Margulis–Mozes \cite{EMM05} studies this case in more detail and in fact provides a Diophantine condition on $Q$, under which logarithmic factor can be removed: if the difference form $\difform{Q}$ is not \textit{extremely well approximable by split integral forms} (see Definition \ref{def:EWAS}), then $|V_Q(X,1)|\ll_Q X$, and consequently $|V_Q(X,Y)| \ll_Q X(1+Y)$ for any $X \geqslant 1$ and $Y \geqslant 0$ (see Lemma \ref{lemma:counting_jointly_bounded_values}). 

Returning to the topic of Eisenstein series, we note from the recent work \cite{KKL, KKL-erratum} of Kelmer{\textendash}Kontorovich{\textendash}Lutsko that spectral growth properties of $E_\Gamma (s, z)$ can be highly sensitive to the existence of many small gaps between the values of the quadratic form $Q_z$. Indeed, the cited paper uses this relationship in combination with Eskin{\textendash}Margulis{\textendash}Mozes' general value asymptotics for binary forms to establish $L^2$-bounds on $E_\Gamma (s, z)$ (with respect to the height of the spectral parameter). \\[+1em]
The purpose of this paper is to use the refined counting estimates (valid for a more exclusive class of forms) from \cite{EMM05} to give almost-sure logarithmic improvements on the spectral second-moment bounds of Kelmer–Kontorovich–Lutsko. We also establish corresponding upper bounds on the Eisenstein series attached to the two cusps $\cusp \in \lbr \infty , 0 \rbr$ of the Hecke congruence subgroup
\begin{align*} 
\Gamma_0 (p) 
:=
\lllebr \gamma 
\in \Gamma \, \rriv \, 
\gamma \equiv_p
\begmat
\ast & \ast \\
0 & \ast
\endmat 
\rrribr
\end{align*}
of prime level $p$. In particular, our upper bounds are explicit in the level $p$.

To state our results, let us write $X_0(p)=\Gamma_0(p) \backslash \hyp$ and $X(1)= \Gamma\backslash \hyp$. In order to compare points as the level $p$ varies, we fix a point $\Gamma w \in X(1)$ and consider the preimage $\pi^{-1}(\Gamma w)$ of $\Gamma w$ under the projection $\pi : X_0(p) \rightarrow X(1).$ Our main result is then as follows.

\begin{theorem}\label{thm:main_thm_for_levelp_EWAS}
Let $p$ be a prime number, and let $w \in \hyp$ be any point with the property that the quadratic form $Q_w \oplus (-Q_w)$ is not EWAS (see Definition \ref{def:EWAS}). Then, for every $z \in \pi^{-1}(\Gamma w)$, the following estimate holds with an implied constant depending only on $w,$ but not on $p$ or $z$: 
\begin{align*} 
&\int_{-T}^T \llev E_\infty \lep \tfrac12 + it, z \rip \rriv^2 \, \dd t  
+ p \int_{-T}^T \llev E_0 \lep \tfrac12 + it, z \rip \rriv^2 \, \dd t  
\\[+0.5em] &\hspace{6.3cm} \ll_w 
T \lep p + \log T \rip (\log T)^{4/3}  (\log \log T)^{2/3}.
\end{align*}
Consequently, for each of the two Eisenstein series we have the estimates
\begin{align*}
\int_{-T}^T \llev E_\cusp \lep \tfrac12 + it, z \rip \rriv^2 \, \dd t
\ll_w 
\begcase
T \lep p + \log T \rip (\log T)^{4/3} (\log \log T)^{2/3} &\text{if } \cusp = \infty, \\[+0.3em]
T  \lep 1 + p^{-1} \log T \rip (\log T)^{4/3} (\log \log T)^{2/3} &\text{if } \cusp = 0,
\endcase
\end{align*}
as $T \rightarrow \infty.$ 
\end{theorem}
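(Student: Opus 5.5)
We reduce the two Eisenstein series of $\Gamma_0(p)$ to Epstein zeta functions of a small number of binary forms attached to the fixed point $w$, and then bound each second moment via the approximate functional equation and the counting of pairs in $V_Q(X,Y)$. Write $z=\gamma w$ with $\gamma\in\Gamma$. Since $\Gamma_0(p)$ has index $p+1$ with cosets indexed by $\mathbb{P}^1(\mathbb{F}_p)$, unfolding gives, for $\re(s)>1$,
\begin{align*}
E_\infty(s,z)=\frac{1}{2}\sum_{\substack{\bfv\in\Z^2_{\mathrm{pr}}\\ p\mid v_1}}Q_z(\bfv)^{-s},
\qquad
E_0(s,z)=\frac{p^{-s}}{2}\sum_{\substack{\bfv\in\Z^2_{\mathrm{pr}}\\ p\nmid v_1}}Q_z(\bfv)^{-s},
\end{align*}
with the scaling $p^{-s}$ coming from the width of the cusp $0$ (up to the paper's normalization). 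Since $|p^{-s}|^2=p^{-1}$ on $\re(s)=\tfrac12$, the weight $p$ in front of $\int|E_0|^2$ cancels this factor. Writing $Q_z=Q_w\circ g$ with $g\in\SL_2(\Z)$, the condition $p\mid v_1$ becomes membership of $g\bfv$ in a sublattice $L\subset\Z^2$ of index $p$. Removing primitivity by M\"obius inversion (dividing by $\zeta(2s)$, harmless on the critical line up to the usual $\log$ losses in $L^2$), both series become combinations of the Epstein functions $\epstein_{Q_w|_L}(s)$ and $\epstein_{Q_w}(s)$. Using $\sum_{p\nmid v_1}=\sum_{\text{all}}-\sum_{p\mid v_1}$, the theorem reduces to bounding
\begin{align*}
\int_{-T}^{T}\bigl|\epstein_{Q_w}(\tfrac12+it)\bigr|^2\,\dd t
\quad\text{and}\quad
\int_{-T}^{T}\bigl|\epstein_{Q_w|_L}(\tfrac12+it)\bigr|^2\,\dd t .
\end{align*}

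For each such form $Q$ (of discriminant comparable to $p^2$ in the case of $L$), we apply the approximate functional equation on the critical line. This writes $\epstein_Q(\tfrac12+it)$ as a smoothed Dirichlet polynomial of length $X\asymp |t|\sqrt{\det Q}$, plus a dual sum over the dual form of the same length, with gamma-factor ratio of modulus $1$. Squaring, integrating over $|t|\le T$ against a smooth weight, and inserting the Montgomery--Vaughan type mean value, we get roughly
\begin{align*}
\sum_{\bfv,\bfw}\frac{\min\bigl(T,\,|\log Q(\bfv)/Q(\bfw)|^{-1}\bigr)}{Q(\bfv)^{1/2}Q(\bfw)^{1/2}} .
\end{align*}
Dyadically decomposing $Q(\bfv),Q(\bfw)\sim X'\le X$ and the gap $|Q(\bfv)-Q(\bfw)|\sim Y$ turns this into weighted counts of $V_Q(X',Y)$. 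The diagonal and near-diagonal pairs ($Y\lesssim X'/T$) contribute about $T\cdot X'^{-1}|V_Q(X',X'/T)|$. For the remaining pairs, the weight $X'/Y$ is paired with $|V_Q(X',Y)|$.

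The non-EWAS hypothesis enters through Lemma~\ref{lemma:counting_jointly_bounded_values}: $|V_{Q_w}(X,Y)|\ll_w X(1+Y)$, with no $\log X$ loss. For $Q_w|_L$ we use $V_{Q_w|_L}(X,Y)\subset V_{Q_w}(X,Y)$, which keeps the constant depending only on $w$. We must not naively apply the lemma to $Q_w|_L$, since whether it is EWAS, and its implied constant, would depend on $p$. The range where $X'$ exceeds about $T$ is where $p$ appears, because the polynomial length scales with $\det Q$. This should produce the term $Tp$ times logarithms, while the $p$-independent contribution gives the $T\log T$ term.

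The main obstacle is the intermediate gap range $X'/T<Y<X'$. Summing the bound $X'(1+Y)\cdot X'/(YX')$ over dyadic $Y$ and $X'$ naively costs $(\log T)^2$. The exponent $4/3$ with $(\log\log T)^{2/3}$ suggests an optimization instead: split at a threshold $Y_0$, use the trivial bound $|V_Q(X',Y)|\le |V_Q(X',Y_0)|$ in a smoothing for small $Y$, and use the sharper count for large $Y$; alternatively, balance against a lacunary count of how many $X'$ dyadic ranges have exceptionally many near-coincidences. I would first try choosing the smoothing length of the approximate functional equation as a free parameter $\Delta$, with an error of order $T/\Delta$ against a main term $T\log T\cdot\Delta^{1/2}$-type loss, and optimize to obtain $(\log T)^{4/3}(\log\log T)^{2/3}$. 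Finally, the separate cusp bounds follow by dropping one nonnegative term and dividing by $p$ for $\cusp=0$.
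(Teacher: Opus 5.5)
Your reduction matches the paper's. You unfold to primitive vectors, write $Q_{\gamma w}=Q_w\circ\gamma^\intercal$ so that the level sits in $\Lambda_\gamma=\gamma^\intercal(p\Z\times\Z)$, and remove primitivity. This produces $2\zeta(2s)(1-p^{-2s})$ in the denominator and an extra $p^{-2s}\epstein(Q,\Z^2;s)$ term. You then bound the sublattice sums by enlarging them to all of $\Z^2$, so that only the counts $\#V_{Q_w}(X,Y)$ from Lemma~\ref{lemma:counting_jointly_bounded_values} enter.

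The genuine gap is your final paragraph. The factor $(\log T)^{4/3}(\log\log T)^{2/3}$ does not come from any optimization inside the Epstein second moment. It is $\sup_{2\le|t|\le T}|\zeta(1+2it)|^{-2}$, bounded via the Vinogradov{\textendash}Korobov estimate $1/\zeta(1+it)\ll(\log t)^{2/3}(\log\log t)^{1/3}$. In other words, it is exactly the loss from dividing by $\zeta(2s)$ that you called harmless.

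The correct endgame runs as follows. On $\re(s)=\tfrac12$ one has $|1-p^{-1-2it}|\ge\tfrac12$. Hence, pointwise, $|E_\infty|^2+p|E_0|^2\ll(|A|^2+|B|^2)/|\zeta(1+2it)|^2$, with $A=\epstein(Q_w,\Lambda_\gamma;\cdot)$ and $B=\epstein(Q_w,\Z^2;\cdot)$. You pull out the Korobov bound. Then you need $\int_{-T}^T|\epstein(Q_w,\Lambda;\tfrac12+it)|^2\,dt\ll_w T(m+\log T)$ for $m=p$ and $m=1$, with \emph{no} further logarithmic loss; this is Theorem~\ref{thm:main_epstein_thm}.

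Your proposed $\Delta$-optimization cannot generate a $\log\log T$. Any loss it introduced into the Epstein moments would be multiplied by the unavoidable factor $|\zeta(1+2it)|^{-2}$ and overshoot the stated bound. If you used only $1/\zeta(1+it)\ll\log t$, you would get $(\log T)^2$ instead.

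The intermediate-gap obstacle is an artifact of the sharp kernel $\min(T,|\log Q(\bfv)/Q(\bfw)|^{-1})$, which decays only like $1/x$. Work instead on dyadic ranges $t\in[U,2U]$ with a smooth cutoff $f(t/U)$, and integrate by parts using $U^j|\partial_t^jW_t^{\pm}(y)|\ll(1+y/U)^{-A}$. The kernel is then
$\ll U(1+U|\log Q(\bfv)/Q(\bfw)|)^{-j}(1+Q(\bfv)/mU)^{-A}(1+Q(\bfw)/mU)^{-A}$.

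Now take $Q(\bfv),Q(\bfw)\asymp X$ with gap in $(2^{k-1}X/U,2^kX/U]$. The count $\ll X(1+2^kX/U)$ gives a contribution $\ll 2^{-jk}U+2^{-(j-1)k}X$. So the $k$-sum is geometric, and each dyadic $X$ costs only $U+X$. Summing over dyadic $X$, with the weights decaying beyond $mU$, gives $U\log(2mU)+mU$. Summing over dyadic $U\le T$ gives $T(m+\log T)$.

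With your $1/x$ kernel, your own bookkeeping gives $\ll X'$ for each dyadic gap range $1\le Y\le X'$. That leaves only $pT\log(pT)$ for the $p$-term instead of $pT$, which breaks the uniformity in $p$ asserted in the theorem.
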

\begin{remark}\label{remark:almost_every_improvement}
In Proposition \ref{lemma:non_EWAS_condition} we give an explicit condition on the form $Q_w$ (equivalently, the point $w \in \hyp$) which is sufficient for the non-EWAS assumption in Theorem \ref{thm:main_thm_for_levelp_EWAS} to hold. We show, additionally, that this sufficient condition is satisfied for almost every $w \in \hyp.$ 
\end{remark}
We also have the following uniform version of Theorem \ref{thm:main_thm_for_levelp_EWAS}. 
\begin{corollary}\label{cor:uniform_version}
Let $K \subseteq \hyp$ be compact, and assume that 
\begin{align*}
C_K
:= 
\sup_{w \in K} 
\sup_{X \geqslant 1} 
\frac{|V_{Q_w} (X,1)|}{X}
< 
\infty,
\end{align*}
where $V_{Q_w}(X,1) \subseteq \Z^4$ is the set defined in \eqref{def:VQ_XY_set}. Then the bounds of Theorem \ref{thm:main_thm_for_levelp_EWAS} hold uniformly for $w \in K,$ in the sense that the implied constant only depends on $K$ and $C_K$, but not on $p, z,$ or $w$. 
\end{corollary}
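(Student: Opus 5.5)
The plan is to re-run the proof of Theorem \ref{thm:main_thm_for_levelp_EWAS} while tracking where the implied constant depends on $w$. The claim is that the dependence enters in only two ways. The first is through the counting constant $C_w := \sup_{X \geqslant 1} |V_{Q_w}(X,1)|/X$, which the non-EWAS hypothesis guarantees is finite via \cite{EMM05}. The second is through continuous (indeed smooth) quantities attached to $w$: $\im(w)$, $|w|$, the discriminant-normalised coefficients of $Q_w$, and the extreme values $\min_{\|\bfv\|=1} Q_w(\bfv)$ and $\max_{\|\bfv\|=1} Q_w(\bfv)$. Once this is checked, compactness of $K$ bounds the second group uniformly, and $C_K$ bounds the first, which gives the corollary.

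The steps, in order, are as follows.

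First, I reduce from $z \in \pi^{-1}(\Gamma w)$ to forms $Q_{\gamma w}$, or forms of the shape $Q_w \circ A$ with $A$ an integral matrix of determinant $1$ or $p$, arising from the coset representatives of $\Gamma_0(p)\backslash\Gamma$. This reduction is purely algebraic and independent of $w$.

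Second, I apply the approximate functional equation for $\epstein_Q(\tfrac12+it)$ together with a mean value theorem for Dirichlet polynomials. This bounds the $L^2$-integrals by weighted sums over pairs $(\bfv,\bfw)$ with $Q(\bfv),Q(\bfw)\leqslant X \approx T$ and $|Q(\bfv)-Q(\bfw)|$ small. The constants in this step depend only on $T$, $p$ and the normalisation of $Q$, which is uniform since all $Q_z$ have discriminant $-4$.

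Third, I insert the counting bound $|V_{Q}(X,Y)| \ll X(1+Y)$, taken from Lemma \ref{lemma:counting_jointly_bounded_values}. Here I must verify that the implied constant in that lemma is at most a constant multiple of $C_w$, where the multiple depends only on compactly varying data. Replacing $Q_w$ by $Q_w\circ A$ changes the set $V$ only through a linear change of variables, and I need to check that the resulting comparison constants do not depend on $w$.

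Fourth, I optimise the dyadic and smoothing parameters. This optimisation produces the factor $(\log T)^{4/3}(\log\log T)^{2/3}$, and it is independent of $w$.

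The main obstacle is the third step. Deducing $|V_Q(X,Y)| \ll X(1+Y)$ from the case $Y=1$ is done by covering the interval $[-Y,Y]$ by unit intervals and using shifted values. I expect this to give a constant that is an absolute multiple of $C_w$, since for $Y<1$ the bound is monotone in $Y$. Small $X$, and the terms with $\bfv=\bfw$ or $\bfv=-\bfw$, are handled by lattice point counts. These are $\ll X/\min_{\|\bfv\|=1}Q_w(\bfv)$, and that quantity is bounded on $K$ because $\im$ is bounded below there.

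A further point concerns the level-$p$ sublattice forms $Q_w\circ A$. If the proof of the theorem bounded their $V$-counts by $V_{Q_w}(X',Y')$ with $X' \ll pX$, then those bounds are uniform as well, since they are expressed in terms of $C_w$.

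Finally, if $C_w$ is not itself finite for a given $w$ but $C_K$ is assumed finite, the argument runs verbatim: the non-EWAS hypothesis was used in the theorem only to guarantee $C_w<\infty$. Substituting $C_K$ for $C_w$ throughout yields bounds whose implied constants depend only on $K$ and $C_K$.
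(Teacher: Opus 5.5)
Your proposal is correct and follows essentially the same route as the paper. The only $w$-dependence in the proof of Theorem~\ref{thm:main_thm_for_levelp_EWAS} enters through lattice-point counts for $Q_w$, which are controlled by the smallest eigenvalue of $A_{Q_w}$ and hence uniform on compact $K$, and through the constant in Lemma~\ref{lemma:counting_jointly_bounded_values}, which is an absolute multiple of $C_K$; the level is absorbed into the sublattice $\Lambda_\gamma$ of Lemma~\ref{lemma:level_in_sublattice} while $Q_w$ stays fixed.

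One correction to your fourth step: the factor $(\log T)^{4/3}(\log\log T)^{2/3}$ does not come from optimising parameters, since Theorem~\ref{thm:main_epstein_thm} gives $T(m+\log T)$ with no such factor. It comes from dividing out $2\zeta(2s)(1-p^{-2s})$ in Lemma~\ref{lemma:clothesline} and applying Korobov's bound \eqref{eq:Korobov_bound} to $1/\zeta(1+2it)$, which is equally independent of $w$, so your conclusion stands.
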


\begin{remark}\label{remark:main_Eisenstein_theorem}
For each fixed $z\in\hyp$ for which $\difform{Q}_z$ is not EWAS, our method improves on the logarithmic factor in the $L^2$-estimates of \cite{KKL, KKL-erratum}: As $T \rightarrow \infty,$ we have
\begin{align*}
\int_{-T}^{T}|E_\Gamma(\tfrac12+it,z)|^2\,dt
\ll_z T(\log T)^{7/3}(\log\log T)^{2/3}.
\end{align*}
Moreover, if $K\subset\hyp$ is compact and satisfies the assumption of Corollary 3, then
\begin{align*}
\sup_{z\in K}\int_{-T}^{T}
|E_\Gamma(\tfrac12+it,z)|^2\,dt
\ll_{K,C_K}
T(\log T)^{7/3}(\log\log T)^{2/3}.
\end{align*}
Indeed, these estimates follow from Theorem \ref{thm:main_epstein_thm} below with $\Lambda = \Z^2$, the estimate \eqref{eq:Korobov_bound}, the identity $2\zeta(2s)E_\Gamma(s,z)=Z_{Q_z}(s)$, and Proposition \ref{lemma:non_EWAS_condition}.
\end{remark}

Theorem \ref{thm:main_thm_for_levelp_EWAS} also gives a bound on the frequency of large values of $E_\cusp (s, z)$. In fact, the same conclusion holds for the level-$1$ Eisenstein series $E_\Gamma$ under the hypothesis of Remark \ref{remark:main_Eisenstein_theorem}.

\begin{corollary}\label{rem:large-values}
Fix a prime $p$, let $w$ satisfy the hypotheses of Theorem \ref{thm:main_thm_for_levelp_EWAS}, and let $z\in\pi^{-1}(\Gamma w)$. For either cusp
$c\in\{\infty,0\}$ and every $\varepsilon>0$, we have
\begin{align*} \indent
\frac{1}{T} \cdot 
\mathrm{meas} \llep
\llebr t\in[T,2T] : \lev E_\cusp  \lep \tfrac12+it,z\rip \riv > T^\varepsilon
\rribr \rrip
\ll_{w,p} 
\frac{(\log T)^{7/3}(\log\log T)^{2/3}}{T^{2\varepsilon}}
\end{align*}
for sufficiently large $T$. Consequently, at each fixed admissible point $z$, the proportion of $s \in \sfr12 + i[T,2T]$ for which $|E_\cusp( s , z)|$ exceeds $T^\varepsilon$ tends to zero as $T\to\infty$.
\end{corollary}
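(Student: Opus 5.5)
The plan is to apply Chebyshev's inequality (a Markov-type bound on the second moment) to the $L^2$-estimate of Theorem \ref{thm:main_thm_for_levelp_EWAS}. Fix $p$, a cusp $\cusp \in \lbr \infty, 0 \rbr$ and $z \in \pi^{-1}(\Gamma w)$. Write
\begin{align*}
S_T := \lbr t \in [T,2T] : \lev E_\cusp \lep \tfrac12 + it, z \rip \riv > T^\varepsilon \rbr .
\end{align*}
On $S_T$ the integrand $|E_\cusp(\frac12+it,z)|^2$ exceeds $T^{2\varepsilon}$. Hence
\begin{align*}
T^{2\varepsilon}\, \mathrm{meas}(S_T) \leqslant \int_{T}^{2T} \llev E_\cusp \lep \tfrac12+it,z \rip \rriv^2 \, \dd t \leqslant \int_{-2T}^{2T} \llev E_\cusp \lep \tfrac12+it,z \rip \rriv^2 \, \dd t .
\end{align*}

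Next, apply the cusp-wise bounds of Theorem \ref{thm:main_thm_for_levelp_EWAS} with $T$ replaced by $2T$. Since $p$ is now fixed, the implied constant may depend on $p$. For $T \geqslant 3$, say, we have $p + \log(2T) \ll_p \log T$ and $1 + p^{-1}\log(2T) \ll_p \log T$. Likewise $(\log 2T)^{4/3}(\log\log 2T)^{2/3} \ll (\log T)^{4/3}(\log\log T)^{2/3}$. So for either cusp the right-hand side is $\ll_{w,p} T(\log T)^{7/3}(\log\log T)^{2/3}$. Dividing by $T^{1+2\varepsilon}$ gives the first claim for all sufficiently large $T$.

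The consequence follows because $(\log T)^{7/3}(\log\log T)^{2/3} = o(T^{2\varepsilon})$ for every fixed $\varepsilon > 0$. Thus the proportion $\frac1T \mathrm{meas}(S_T)$ tends to $0$. This proportion is exactly the proportion of $s \in \frac12 + i[T,2T]$ with $|E_\cusp(s,z)| > T^\varepsilon$. For the level-one statement, the same argument runs verbatim with Remark \ref{remark:main_Eisenstein_theorem} in place of Theorem \ref{thm:main_thm_for_levelp_EWAS}.

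There is no real obstacle here. The only point needing care is absorbing the $p$-dependence and the shift $T \mapsto 2T$ into the implied constant, which is harmless because $p$ is fixed.
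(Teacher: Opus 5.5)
Your proposal is correct and matches the paper's proof: Chebyshev's inequality on $[T,2T]$, then Theorem \ref{thm:main_thm_for_levelp_EWAS} with the fixed prime $p$ absorbed into the implied constant. Your explicit treatment of the shift $T \mapsto 2T$ and of the factors $p + \log T$ and $1 + p^{-1}\log T$ simply spells out what the paper leaves implicit.
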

\begin{proof}
By Chebyshev's inequality, the left-hand side is bounded by
\begin{align*}
T^{-1-2\varepsilon}  \int_T^{2T}  
\left|E_\cusp  \lep \tfrac12+it,z\rip \right|^2 \, \dd t.
\end{align*}
The claimed estimate then follows from Theorem \ref{thm:main_thm_for_levelp_EWAS}, since $p$ is fixed.
\end{proof}

The value asymptotics proved in \cite{EMM98} and \cite{EMM05} are, to a large extent, stated with implied constants that depend on the quadratic forms. For this reason, a key insight towards obtaining an implied constant in Theorem \ref{thm:main_thm_for_levelp_EWAS} which is independent of the level is to "separate" $p$ from the quadratic form $Q_w$ whose gaps regulate the growth of $E_\cusp (s, z)$ on the critical line. We accomplish this by considering a version of the Epstein zeta function that sums the values of a quadratic form over a \textit{sublattice} $\Lambda \subseteq \Z^2$ of finite index, rather than the full lattice $\Z^2$; that is, we consider
\begin{align*}
\epstein(Q, \Lambda ; s)
:=
\sumprime_{\bfv \in \Lambda} \frac{1}{Q (\bfv)^s} 
\qquad (\text{for } \re(s) > 1).
\end{align*}
Informally speaking, we are thus able to record the level $p$ in the sublattice $\Lambda,$ while keeping the quadratic form $Q_w$ constant. 

Theorem \ref{thm:main_thm_for_levelp_EWAS} is derived from the following $L^2$-bound on $\epstein (Q, \Lambda, \sfr12 + it)$, which may be of independent interest. (Note that the hypothesis concerning the value counts of $Q$ is precisely a manifestation of the Diophantine condition on $Q$ discussed above.)

\begin{theorem}\label{thm:main_epstein_thm}
Let $Q$ be a positive-definite binary quadratic form with $\det A_Q = 1.$ Let $\Lambda \subseteq \Z^2$ be any sublattice of index $m.$ Assume that the number of pairs $(\bfv, \bfw) \in \Z^2$ with bounded values (that is, $Q(\bfv), Q(\bfw) \ll X$) and uniformly bounded gaps (that is, $|Q(\bfv) - Q(\bfw)| \ll 1$) satisfies $\# V_Q (X, 1) \ll_Q X$. Then, for any $T \geqslant 2,$ we have
\begin{align}
\int_{-T}^T \llev \epstein \lep Q, \Lambda; \tfrac12 + it \rip \rriv^2 \, \dd t
\ll_Q
T \lep m + \log T \rip,
\end{align}
where the implied constant is independent of $\Lambda$ and $m.$ 
\end{theorem}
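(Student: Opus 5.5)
The plan follows the approach of Kelmer–Kontorovich–Lutsko: an approximate functional equation for $\epstein(Q,\Lambda;s)$, followed by a mean value estimate for Dirichlet polynomials whose frequencies are the values $\log Q(\bfv)$.

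\emph{Functional equation.} Write $\Lambda = g\Z^2$ with $\det g = m$, and put $Q_\Lambda = Q\circ g$. Then $Q_\Lambda$ is a positive-definite form of determinant $m^2$. Its theta series satisfies Poisson summation with dual lattice $\Lambda^\ast = (g^{-1})^{\intercal}\Z^2 \supseteq \Z^2$. This gives
\begin{align*}
\pi^{-s}\Gamma(s)\,\epstein(Q,\Lambda;s) = m^{-1}\,\pi^{-(1-s)}\Gamma(1-s)\,\epstein(Q^{\ast},\Lambda^{\ast};1-s),
\end{align*}
where $Q^\ast = Q\circ A_Q^{-1}$ is again unimodular, and $\Lambda^\ast\supseteq \Z^2$ has index $m$ over $\Z^2$. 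Integrating against a smooth weight in the standard way gives, for $s=\tfrac12+it$ with $|t|\asymp T$,
\begin{align*}
\epstein(Q,\Lambda;s) = \sum_{\bfv\in\Lambda\setminus\{\0\}} Q(\bfv)^{-s}\,V\!\left(\tfrac{Q(\bfv)}{X}\right) + \chi(s)\, m^{-1}\sum_{\bfw\in\Lambda^\ast\setminus\{\0\}} Q^\ast(\bfw)^{s-1}\,V\!\left(\tfrac{Q^\ast(\bfw)\,X}{T^2}\right) + O(T^{-A}).
\end{align*}
Here $|\chi(\tfrac12+it)|=1$, and one may take $X\asymp T$. The dual sum is harmless because $Q^\ast(\bfw)\ge c_Q|\bfw|^2$ and $\Lambda^\ast\subseteq m^{-1}\Z^2$.

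\emph{Mean value.} By a dyadic decomposition, each piece is a Dirichlet polynomial $\sum_n a_n \lambda_n^{-it}$ with $\lambda_n\in[N,2N]$. For this I will use the Montgomery–Vaughan/Gallagher-type inequality
\begin{align*}
\int_{-T}^T \Bigl|\sum a_n \lambda_n^{-it}\Bigr|^2 \dd t \ll \sum_{|\log\lambda_n-\log\lambda_{n'}|\le 1/T}\bigl(T+O(1)\bigr)\,|a_na_{n'}|,
\end{align*}
or more crudely $\ll T\cdot\#\{\text{pairs with } |\lambda_n-\lambda_{n'}|\le N/T\}\cdot N^{-1}$. For $N\le T$, closeness $|\log\lambda-\log\lambda'|\le 1/T$ means $|Q(\bfv)-Q(\bfw)|\le N/T\le 1$. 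The pairs with $\bfv,\bfw\in\Lambda\subseteq\Z^2$ therefore form a subset of $V_Q(2N,1)$, so by hypothesis (or Lemma \ref{lemma:counting_jointly_bounded_values}) there are $\ll_Q N$ of them. That bound is uniform in $\Lambda$, since we only count pairs inside $\Z^2$.

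The diagonal terms $\bfv=\pm\bfw$ are included in this count, and they deserve attention because they produce the $m$-dependence. Summing over dyadic ranges $N\le X$, each range contributes $T\cdot N^{-1}\cdot O_Q(N)=O_Q(T)$. There are $\log T$ ranges, which gives $T\log T$.

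\emph{Dual side and the factor $m$.} On the dual side, $\Lambda^\ast$ is finer than $\Z^2$, so the counting hypothesis does not apply directly. Instead I write $\Lambda^\ast=\bigcup$ of $m$ cosets of $\Z^2$. Alternatively, and more simply, I scale: $Q^\ast(\bfw)$ with $\bfw\in\Lambda^\ast\subseteq m^{-1}\Z^2$ equals $m^{-2}Q^\ast(m\bfw)$. The dual polynomial then becomes $m^{-1}\cdot m^{1-2s}\cdots$, which has modulus $\asymp 1$ after normalization, over $\bfu=m\bfw\in m\Lambda^\ast\subseteq\Z^2$ with $Q^\ast(\bfu)\le m^2T^2/X$. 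Gaps $|Q^\ast(\bfu)-Q^\ast(\bfu')|\le m^2N'/T$ must now be counted. This is the step I expect to be the main obstacle. The plan is to bound the counts by $V_{Q^\ast}(N'',\,Y)\ll N''(1+Y)$, using that $Q^\ast$ is $\mathrm{SL}_2(\R)$-equivalent to $Q$, hence inherits the hypothesis. This is where the extra $Tm$ arises, rather than an $m\log T$.

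I expect the range choice $X\asymp T/m$ or similar to balance the two sides, yielding $T(m+\log T)$ overall. The additive $Tm$ should appear only from the top dyadic block where $Y\asymp m$. The only genuinely delicate bookkeeping is keeping the $m$-dependence out of the $Q$-constants throughout.
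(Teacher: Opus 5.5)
There is a genuine gap, located at the step you flag as the main obstacle. The cutoff $X$ in your approximate functional equation determines the power of $m$, and neither $X\asymp T$ nor $X\asymp T/m$ gives $T(m+\log T)$.

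Push your own mean-value estimate to the end. A dyadic block of integer vectors with $Q\asymp N$ and coefficients $\asymp N^{-1/2}$ costs $\ll T\cdot N^{-1}\cdot\#V_Q(2N,cN/T)$, and this is $\ll T+N$. For $N>T$ this needs the second part of Lemma~\ref{lemma:counting_jointly_bounded_values} with gap $N/T>1$. So a sum of length $L$ costs $\ll T\log(2L)+L$. Since the hypothesis only controls pair counts in $\Z^2$, you get no saving from the sublattice uniformly in $\Lambda$.

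After your rescaling $\bfu=m\bfw\in m\Lambda^\ast\subseteq\Z^2$, the dual coefficients are $Q^\ast(\bfu)^{-1/2}$, and the range $Q^\ast(\bfw)\lesssim T^2/X$ becomes $Q^\ast(\bfu)\lesssim m^2T^2/X$. The total is therefore $T\log(2mT)+X+m^2T^2/X$. This is $\asymp m^2T$ for $X\asymp T$ and $\asymp m^3T$ for $X\asymp T/m$, so the dual sum is not harmless: for your choice it dominates.

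The balance point is $X\asymp mT$. Both sides then have length $mT$. The blocks $T<N\le mT$, where the permitted gap $N/T$ grows to $m$ (matching your remark on $Y\asymp m$), contribute $\sum N\asymp mT$. Altogether one gets $T\log(2mT)+mT\ll T(m+\log T)$. Note that this $mT$ comes from off-diagonal near-coincidences, not from the diagonal terms as you suggest.

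The paper builds this balance in from the outset. With $J$ the matrix from Lemma~\ref{lemma:epstein_identity_sublattice_dual}, one has $m\Lambda^\ast=J\Lambda$ and $Q^\ast=Q\circ J$ when $\det A_Q=1$. Hence $\epstein(Q^\ast,\Lambda^\ast;s)=m^{2s}\epstein(Q,\Lambda;s)$, so your functional equation is exactly the self-dual identity \eqref{eq:sublattice_epstein_fcteq}. Both smoothed sums then run over $\Lambda$ itself with weights $W_t^\pm(Q(\bfv)/m)$, i.e.\ length $\asymp m|t|$, and no dual lattice appears.

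This also repairs your transfer of the counting hypothesis to $Q^\ast$. $\SL_2(\R)$-equivalence proves nothing: all positive-definite unimodular binary forms are $\SL_2(\R)$-equivalent, and $\#V_Q$ is not an $\SL_2(\R)$-invariant. The correct reason is that $Q^\ast=Q\circ J$ with $J\in\SL_2(\Z)$.

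Finally, the weights of an approximate functional equation depend on $t$, so a Gallagher-type inequality for fixed coefficients does not apply verbatim. The paper majorizes each range $[U,2U]$ by $f(t/U)$, expands the square, and integrates by parts using the $t$-derivative bounds in Remark~\ref{remark:weight_bounds}. That step is routine, but it has to be carried out.
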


We end this introduction by briefly commenting on the utility and parallels of $L^2$-estimates such as the bound in Theorem \ref{thm:main_thm_for_levelp_EWAS}. Generally, such bounds are useful for controlling weighted spectral integrals (because of Cauchy{\textendash}Schwarz), and they quantify the frequency of large values as the spectral parameter varies. An application of spectral Eisenstein estimates to effective orbit counting is given by Burrin{\textendash}Nevo{\textendash}R\"uhr{\textendash}Weiss~\cite{BNRW20}.
Uniform pointwise bounds also have arithmetic applications, including estimates for class-group $L$-functions~\cite{Nord21}.

There is a related literature on $L^r$-norms obtained by integrating over the modular surface, such as Humphries' fourth-moment estimates for truncated Eisenstein series~\cite{Hum18}. In that setting, the spectral parameter is fixed and the point on the surface varies. In the present paper, the point $z$ is fixed and the integration is over the spectral parameter.

\subsection{Notation and conventions}
Throughout, we think of points in (e.g.) $\R^n$ and $\Z^n$ as column vectors. The symbol $\sideset{}{'}{\textstyle \sum}$ will be used to denote summation with the zero vector $\0$ omitted. We will also consistently use the following notation:
\begin{itemize}
    \item $\Lambda_p$ will denote the lattice $p \Z \times \Z = \begsmat p & 0 \\ 0 & 1 \endsmat \Z^2$. 
    \item $Q$ will always denote a positive-definite binary quadratic form. $Q_z$ will denote the form \eqref{eq:quadratic_form_Qz} associated to $z \in \hyp.$ $\difform{Q}$ will denote the signature-$(2,2)$ quadratic form given by the difference form $Q \oplus (-Q)$ of $Q.$
    \item $\epstein (Q, \Lambda; s)$ will denote the (sublattice) Epstein zeta function (Definition \ref{def:sublattice_epstein}) associated to a binary quadratic form $Q$ and a lattice $\Lambda \subseteq \R^2$; for the standard lattice, we write $\epstein_Q (s) = \epstein (Q, \Z^2 ; s)$.
    \item $\Z_\prim^2$ will denote the set of primitive lattice points $\bfv = (v_1, v_2)^\intercal \in \Z^2$, characterized by $\gcd(v_1, v_2) = 1$.  
\end{itemize}

\section{Preliminary results}
In the subsections below, we recall some relevant facts about quadratic forms, Eisenstein series, and Epstein zeta functions, and establish a number of auxiliary results that will be needed in the subsequent analysis. 
\subsection{Quadratic forms and their value asymptotics}
We will use the symbol $Q$ to denote a positive-definite binary quadratic form, and write $\difform{Q} := Q \oplus (-Q)$ to denote the difference form whose value at $(\bfv, \bfw) \in \Z^4$ is $Q(\bfv) - Q(\bfw)$. Note that this is an example of a \bital{split} (or \bital{metabolic}) quadratic form: its rank $4$ is precisely twice the dimension of a maximal isotropic subspace (e.g., the diagonal embedding of $\R^2$ in $\R^4$). \par 
If $Q$ has Gram matrix $A_Q,$ so that $Q(\bfv) = \bfv^\intercal A_Q \bfv$ holds identically, then the \bital{dual} quadratic form to $Q$ is defined as $Q^\ast (\bfv) = \bfv^\intercal A_Q^{-\intercal} \bfv = \bfv^\intercal A_Q^{-1} \bfv.$ Note that a binary quadratic form $Q$ with determinant $\det A_Q = 1$ is integrally equivalent to its dual form $Q^\ast$.\\[+1em]
The quadratic form $Q_z$ associated to $z \in \hyp$ has the Gram matrix
\begin{align*}
A_z 
=
\frac{1}{\im (z)}
\begmat
|z|^2 & \re(z) \\
\re(z) & 1
\endmat,
\end{align*}
so that $Q_z (\bfv) = \bfv^\intercal A_z \bfv$ holds identically. The dual form of $Q_z$ is then 
\begin{align*}
Q_z^\ast (\bfv) 
=
\bfv^\intercal A_z^{-1} \bfv
=
\frac{1}{\im(z)} \llep v_1^2 - 2 v_1 v_2 \, \re(z) + |z|^2 v_2^2 \rrip.
\end{align*}
We recall the following notion from Eskin{\textendash}Margulis{\textendash}Mozes \cite[Def. 1.2]{EMM05}, specialized to quaternary forms. 
\begin{definition} \label{def:EWAS}
A quadratic form $F$ with signature $(2,2)$ is called \bital{extremely well-approximable by split forms} (to be abbreviated henceforth as \bital{EWAS}) if the following condition holds: For every $A > 0$ there is a split integral quadratic form $F'$ and a real number $K \geqslant 2$ such that 
\begin{align*}
\leV F - K^{-1} F' \riV \leqslant \frac{1}{K^A},
\end{align*}
where $\Vert \cdot \Vert$ is any norm on the space of quaternary quadratic forms (which can be identified with $\R^{10}$). 
\end{definition}
In the case of the split quadratic form $\difform{Q}_z$ associated to $z \in \Gamma \backslash \hyp$, a sufficient condition for this form to not to be EWAS is provided by the following concrete approximation property.
\begin{proposition} \label{lemma:non_EWAS_condition}
Let $z = x + iy \in \Gamma \backslash \hyp$, write $\alpha = |z|^2 = x^2 + y^2$ and $\beta = 2x,$ and consider the quadratic form $Q_z$ defined in \eqref{eq:quadratic_form_Qz}. 
\begin{enumerate}
\item[i)] Suppose that there are constants $\kappa, C > 0$ such that, for every $q \geqslant 1$ and all $a, b \in \Z$, we have
\begin{align} \label{eq:non_EWAS_condition} \qquad \qquad
\max \lllep \lllev \alpha - \frac{a}{q} \rrriv, \lllev \beta - \frac{b}{q} \rrriv \rrrip 
\geqslant 
\frac{C}{q^{\kappa}}.
\end{align}
Then $\difform{Q}_z$ is not EWAS.
\item[ii)] Moreover, almost every $z \in \Gamma \backslash \hyp$ satisfies the condition in i).
\end{enumerate}
\end{proposition}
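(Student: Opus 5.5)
The plan for part i) is to argue by contradiction, reading off the coefficients of $\difform{Q}_z$ directly. Writing $z = x+iy$, we have
\begin{align*}
\difform{Q}_z(\bfv,\bfw) = \frac{1}{y}\Bigl( \alpha (v_1^2 - w_1^2) + \beta (v_1 v_2 - w_1 w_2) + (v_2^2 - w_2^2) \Bigr).
\end{align*}
Since all norms on $\R^{10}$ are equivalent, we may take the sup-norm on coefficients. Suppose $\difform{Q}_z$ is EWAS and fix a large $A > \kappa$. This gives an integral form $F'$ and some $K \geqslant 2$ with $\Vert \difform{Q}_z - K^{-1}F' \Vert \leqslant K^{-A}$. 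Let $a, b, n \in \Z$ be the coefficients of $F'$ at $v_1^2$, $v_1 v_2$ and $v_2^2$. Then
\begin{align*}
\Bigl| \frac{a}{K} - \frac{\alpha}{y} \Bigr|, \quad \Bigl| \frac{b}{K} - \frac{\beta}{y} \Bigr|, \quad \Bigl| \frac{n}{K} - \frac{1}{y} \Bigr| \leqslant K^{-A}.
\end{align*}
Only these three coefficients will be used; the splitness of $F'$ and the $\bfw$-coefficients play no role.

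Next I would split according to the size of $K$.

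\emph{Large $K$.} When $K$ is large in terms of $y$, the third inequality gives $n \asymp K/y$, so in particular $n \geqslant 1$. Dividing the first two inequalities by the third, an elementary computation gives
\begin{align*}
\Bigl| \alpha - \frac{a}{n} \Bigr|, \ \Bigl| \beta - \frac{b}{n} \Bigr| \ll_z \frac{K^{-A}}{n/K} \ll_z K^{-A} \ll_z n^{-A}.
\end{align*}
With $q = n$ this contradicts \eqref{eq:non_EWAS_condition} once $n$ is large, because $A > \kappa$.

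\emph{Bounded $K$.} If, as $A \to \infty$, the admissible $K$ stay bounded, then $F'$ ranges over finitely many integral forms (their coefficients are bounded). Passing to a limit, we get $\difform{Q}_z = K^{-1}F'$ exactly for some fixed pair. In particular $\alpha = a/n$ and $\beta = b/n$ are rational with a common denominator $q = n$. Then the left side of \eqref{eq:non_EWAS_condition} vanishes for this $q$, again a contradiction.

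For part ii), I would first show that almost every pair $(\alpha,\beta) \in \R^2$ satisfies \eqref{eq:non_EWAS_condition} with any fixed $\kappa > 3/2$; the subtle point here is the dependence of the constant $C$ on the point. The argument is Borel–Cantelli. For a fixed $q$, the set of $(\alpha,\beta)$ in a unit square with $\max(|\alpha - a/q|, |\beta - b/q|) < q^{-\kappa}$ for some $a,b$ is a union of $O(q^2)$ boxes of area $4q^{-2\kappa}$. Since $\sum_q q^{2-2\kappa} < \infty$, almost every point lies in only finitely many of these sets. Now take such a point that is also not rational in either coordinate, which excludes a further null set. For each of the finitely many exceptional $q$, the distance to the nearest $(a/q, b/q)$ is positive, because only finitely many such rationals are nearby. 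Shrinking $C$ to cover these $q$ yields \eqref{eq:non_EWAS_condition} with $C = C(\alpha,\beta) > 0$.

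Finally, I would transfer this to $z$. The map $(x,y) \mapsto (\alpha,\beta) = (x^2+y^2, 2x)$ has Jacobian determinant $-4y \neq 0$ on $\hyp$, so it is a local diffeomorphism. Hence preimages of null sets are null, and the condition holds for almost every representative $z$, which gives the statement on $\Gamma \backslash \hyp$. I expect the main obstacle to be the bookkeeping in part i): making the passage from approximation of the coefficients $\alpha/y,\beta/y,1/y$ to simultaneous approximation of $(\alpha,\beta)$ with a common denominator $q \asymp K$ uniform, and treating the degenerate bounded-$K$ case cleanly. The measure-theoretic part is standard.
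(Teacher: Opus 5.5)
Your proposal is correct and follows essentially the paper's route: you read off the coefficients $\alpha/y,\beta/y,1/y$ and divide to get a common-denominator approximation $(a/n,b/n)$ with $n\asymp K$ and error $\ll_z K^{-A}$, which contradicts the Diophantine condition; for ii) you use Borel--Cantelli (the paper fixes $\kappa=2$) and the nonvanishing Jacobian. The one difference is your bounded-$K$ compactness case, which is correct but unnecessary. Your own estimate combined with $n\le 3K/(2y)$ already gives the $A$-uniform bound $K^{A-\kappa}\ll_z 1$, and $K\geqslant 2$ then yields a contradiction as $A\to\infty$, which is how the paper avoids the case split. Note also that your large-$K$ case silently uses that an approximation at level $A$ is also one at every smaller level.
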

\begin{proof}
We first prove Claim \textit{i)}. In the following, any occurrences of $a,b,q, K$ will be understood to depend on a given $A > 0.$ \par 
Consider an integral form $ F(\bfv, \bfw) = a v_1^2 + b v_1 v_2 + q v_2^2 + \cdots$ for $\bfv = (v_1, v_2) ^\intercal$ and $\bfw$ in $\Z^2$. Suppose towards a contradiction that $Q_z$ satisfies \eqref{eq:non_EWAS_condition} and that $\difform{Q}_z$ is EWAS, so that the following holds: \textit{for any $A > 0$ there are integers $a,b,q$ and a real number $K \in [2, \infty)$ such that}
\begin{align} \label{eq:assumptions_for_contradictions} \indent
\lllev \frac{\alpha}{y} - 
\frac{a}{K} \rrriv, \,
\lllev \frac{\beta}{y} - 
\frac{b}{K} \rrriv,\,
\lllev \frac{1}{y} - 
\frac{q}{K} \rrriv 
\leqslant \frac{1}{K^A}.
\end{align}
Then clearly $q \neq 0.$ In fact, we must have $q > 0$ because $y, K > 0$:  taking $A$ so large that $1/2^{A} < 1/2y,$ we have $1/K^A \leqslant 1/2y$. Then \eqref{eq:assumptions_for_contradictions} is violated if $q = 0$. This inequality in fact gives $1/2y \leqslant q/K \leqslant 3/2y$, which forces $q > 0.$ \par 
Since $q \geqslant 1,$ we confront the assumptions \eqref{eq:non_EWAS_condition} and \eqref{eq:assumptions_for_contradictions} by bounding (e.g.) $|\alpha - a / q|$ as follows: By multiplying and dividing by $K q^{-1}$ and using the triangle inequality, we see that this difference equals
\begin{align*}
\lllev \frac{\alpha q}{K} - \frac{a}{K} \rrriv \cdot 
\frac{K}{q}
&=
\lllev \alpha \llep\frac{q}{K} - \frac{1}{y}\rrip 
+\frac{\alpha}{y}
-\frac{a}{K} \rrriv \cdot 
\frac{K}{q}
\breh &\leqslant
\lllev \alpha \llep\frac{q}{K} - \frac{1}{y}\rrip \rrriv
\cdot \frac{K}{q}
+\lllev\frac{\alpha}{y}
-\frac{a}{K} \rrriv 
\cdot \frac{K}{q}
\breh &\leqslant
\frac{\alpha}{K^A}
\cdot \frac{K}{q}
+\frac{1}{K^A}
\cdot \frac{K}{q}.
\end{align*}
Repeating this estimate with $(\beta, b)$ in place of $(\alpha, a)$, we conclude that 
\begin{align} \label{eq:alpha_beta_bounce}
\lllev \alpha - \frac{a}{q} \rrriv , \, 
\lllev \beta - \frac{b}{q} \rrriv 
\leqslant 
\frac{\max ( \alpha + 1, |\beta| + 1 )}{q K^{A-1}}
\leqslant
\frac{c_1}{K^{A-1}}
\end{align}
where $c_1 = c_1 (z) > 0$ is a constant. It now follows from \eqref{eq:non_EWAS_condition} and \eqref{eq:alpha_beta_bounce} that
\begin{align*}
\frac{C}{q^\kappa}
\leqslant 
\frac{c_1}{K^{A-1}}.
\end{align*}
Since $K \geqslant 2,$ this shows that $q$ must be unbounded as $A \rightarrow \infty.$ \par 
We are free to assume that $C$ is so small that $c_2 = c_2(z) := c_1 (z) C^{-1} > 1.$ We also let $A$ be so large that $q \geqslant 2$. Then the inequality above is equivalent to $- \kappa \log q \leqslant \log c_2 - (A-1) \log K$, and thus
\begin{align} \label{eq:for_all_A_bound}
\frac{(A-1) \log K}{\log q} 
\leqslant 
\frac{\log c_2}{\log 2} + \kappa =: c_3 (z)
\end{align}
for all $A \gg 1.$ In particular, since the right-hand side of \eqref{eq:for_all_A_bound} is constant in $A,$ this bound implies that
\begin{align*}
\frac{\log K}{\log q} 
\leqslant 
\frac{c_3}{A - 1}.
\end{align*}
After cross-multiplying and exponentiating, and using that $K \geqslant 2$, we conclude that $q/K$ tends to infinity when $A \rightarrow \infty.$ This is incompatible with the third inequality in \eqref{eq:assumptions_for_contradictions} since $y$ is fixed. \\[+1em]
For Claim \textit{ii)} it will be enough to take $\kappa = 2$ and consider the inequalities
\begin{align} \label{eq:bc_preparation} \indent
\lllev \alpha - \frac{a}{q} \rrriv,  \,
\lllev \beta - \frac{b}{q} \rrriv 
< \frac{1}{q^2}
\qquad (\text{for some $a,b \in \Z$}).
\end{align}
By adding suitable multiples of $q$ to $a$ or $b$, if necessary, we are free to assume that $\alpha, \beta \in [0,1]$. The existence of a pair $(a,b) \in \Z^2$ that satisfies \eqref{eq:bc_preparation} now implies that
\begin{align*}
(\alpha, \beta) \in 
M(q) 
:=
\bigcup_{a = 0}^q
\bigcup_{b = 0}^q \,
\lllep \frac{a}{q}-\frac{1}{q^2}, \, 
\frac{a}{q}+\frac{1}{q^2} \rrrip 
\! \times \!
\lllep \frac{b}{q}-\frac{1}{q^2}, \, 
\frac{b}{q}+\frac{1}{q^2} \rrrip.
\end{align*}
Since $\sum_{q=1}^\infty \vol(M(q)) \ll \sum_{q = 1}^\infty 1/q^2 < \infty,$ the Borel{\textendash}Cantelli lemma shows that the set of $(\alpha, \beta) \in [0,1)^2$ for which \eqref{eq:bc_preparation} holds for infinitely many $q$ is a nullset. 

Consequently, almost every pair $(\alpha, \beta) \in \R^2$ only has finitely many approximations of the form \eqref{eq:bc_preparation}. Given such a pair, we can choose the constant $C > 0$ depending on these finitely many approximations and ensure that \eqref{eq:non_EWAS_condition} is satisfied (with $\kappa = 2$).
 
Finally, we can transfer this metrical result to the set of $(\alpha, \beta) = (x^2 + y^2, 2x)$ for $x + iy \in \hyp$ by using the inverse function theorem and the fact that the Jacobian determinant of $(x,y) \mapsto (x^2 + y^2 , 2x)$ is everywhere non-zero for $x + iy \in \hyp.$ The proof is therefore concluded.
\end{proof}

As in \cite{KKL}, our argument for estimating the Epstein zeta function on the critical line will involve counting jointly bounded values of $Q_z$ and $\difform{Q}_z$. To this end, we consider a general form $Q$ and the set \eqref{def:VQ_XY_set}. Whenever $\difform{Q}$ is non-EWAS, we can give a particularly sharp upper bound on the size of $V_{Q}$ with the following lemma. (See also Remark \ref{remark:EWAS_bad_and_non_EWAS_good} after the proof.)
\begin{lemma}\label{lemma:counting_jointly_bounded_values}
Let $Q$ be a positive-definite quadratic form with determinant $1.$ If $\difform{Q}$ is not EWAS, then we have $\# V_Q (X, 1) \ll_Q X$ for any $X \geqslant 1.$ Furthermore, for any $Q$ that satisfies $\# V_Q(X, 1) \ll_Q X$ (for all $X \geqslant 1$), we also have $\# V_Q (X, Y) \ll_Q X (1 + Y)$ for any $Y \geqslant 0$; in particular, this bound holds when $\difform{Q}$ is not EWAS.
\end{lemma}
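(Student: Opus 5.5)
The lemma has two parts, and I will prove them separately: a citation to Eskin{\textendash}Margulis{\textendash}Mozes for the first, and an elementary covering argument for the second.

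\emph{First claim.} Here I invoke \cite{EMM05} directly. Their main theorem for signature $(2,2)$ concerns the counting function
\begin{align*}
N_{\difform{Q},\Omega}(a,b;X) := \#\lbr \bfu \in \Z^4 : \bfu \in X\Omega,\ a < \difform{Q}(\bfu) < b \rbr,
\end{align*}
where $\Omega$ is a nice bounded domain. It states that if $\difform{Q}$ is not EWAS, then $N_{\difform{Q},\Omega}(a,b;X) \sim c\,(b-a)X^{2}$. The main term carries no extra $\log X$; the error terms come from split rational subspaces and are absorbed under the non-EWAS hypothesis.

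Take $\bfu=(\bfv,\bfw)$, $a=-1$, $b=1$, and choose $\Omega$ to be a ball in the norm $\max(Q(\bfv),Q(\bfw))^{1/2}$, enlarged slightly to be open. With radius $R=\sqrt X$ this gives $\#V_Q(X,1)\ll_Q R^2 = X$ for $X$ large. Small $X$ is trivially covered by enlarging the constant.

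Two subtleties need care.
\begin{itemize}
\item EMM count all of $\Z^4$, possibly excluding isotropic subspaces. Since $Q$ is positive definite, $\bfv,\bfw\neq\0$ imposes no problem: we only count fewer points.
\item The upper bound (as opposed to the asymptotic) is what we need. It holds even in the presence of rational isotropic subspaces, because non-EWAS controls their contribution. Pairs with $Q(\bfv)=Q(\bfw)$ lying on the two isotropic planes $\bfv=\pm\bfw$ contribute only $\ll X$ points anyway, by Gauss circle counting.
\end{itemize}
I expect this translation of the \cite{EMM05} statement into our norm-ball setting to be the main obstacle, mostly a matter of matching hypotheses.

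\emph{Second claim.} For $Y\le 1$, monotonicity gives $V_Q(X,Y)\subseteq V_Q(X,1)$, so the bound is immediate.

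For $Y>1$, I cover the condition $|Q(\bfv)-Q(\bfw)|\le Y$ by $\lceil Y\rceil$-many shifted windows. Partition $[-Y,Y]$ into $J\le 2Y+1$ intervals $I_j=[c_j,c_j+1]$ of length at most $1$, so it suffices to bound each
\begin{align*}
\#\lbr (\bfv,\bfw): Q(\bfv),Q(\bfw)\le X,\ Q(\bfv)-Q(\bfw)\in I_j \rbr.
\end{align*}
The shifted window cannot be recentred by translation, so I argue by Cauchy{\textendash}Schwarz instead. Group the vectors by value: let
\begin{align*}
A_k = \#\lbr \bfv\neq\0 : Q(\bfv)\in[k,k+1) \rbr .
\end{align*}
Then:
\begin{itemize}
\item $\#V_Q(X,1)\asymp \sum_{k\le X} A_k(A_{k-1}+A_k+A_{k+1})\ge \sum_{k\le X} A_k^2$, so $\sum_{k\le X} A_k^2\ll_Q X$.
\item Each shifted count is at most $\sum_{k} A_k(A_{k+\ell-1}+A_{k+\ell}+A_{k+\ell+1})$ with $\ell\approx c_j$.
\item By Cauchy{\textendash}Schwarz this is $\le 3\sum_{k\le X+2} A_k^2 \ll_Q X$.
\end{itemize}
Summing over the $J\ll 1+Y$ windows gives $\#V_Q(X,Y)\ll_Q X(1+Y)$. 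The final assertion follows by combining the two parts.
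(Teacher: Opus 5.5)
Your second part is correct and is the paper's argument with slightly different bookkeeping. Your $A_k$ are the paper's $n_Q(k)$, you bound $\sum_k A_k^2\leqslant \#V_Q(X+1,1)\ll_Q X$, and you apply Cauchy--Schwarz to the $O(1+Y)$ shifted correlations. Two small fixes: your $\asymp$ should just be that inequality, and in the first part the open interval $(-1,1)$ should be widened, e.g.\ to $(-2,2)$, because $V_Q(X,1)$ is defined by the closed condition $|Q(\bfv)-Q(\bfw)|\leqslant 1$.

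The gap is in the first part, at the exceptional subspaces. The asymptotic in \cite[Thm.~1.3]{EMM05} counts only lattice points \emph{not} lying on a rational $\difform{Q}$-isotropic plane. The points on such planes are not absorbed by the non-EWAS hypothesis: each plane carries $\asymp X$ lattice points in the region, all with $\difform{Q}=0$. So they must be added back by hand, and for that you need the complete list of such planes and the fact that there are finitely many.

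Your list $\bfw=\pm\bfv$ is incomplete. Since $Q$ is definite, every isotropic plane of $\difform{Q}$ is a graph $\{(\bfv,g\bfv)\}$ with $g\in O(Q)$, and it is rational exactly when $g$ is. Irrational forms can have rational reflections. Take $Q=\lambda x^2+\lambda^{-1}y^2$ with $\lambda^2=\sqrt2$; it is non-EWAS by Proposition~\ref{lemma:non_EWAS_condition} with $z=i\lambda$, since $\sqrt2$ is badly approximable. Yet the planes $\bfw=\pm(v_1,-v_2)$ are rational and isotropic, and their $\asymp X$ points are counted neither by the theorem nor by your correction.

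Finiteness is the real issue, not a formality. For the rational form $Q=x^2+y^2$, rational rotations give infinitely many such planes, and
\[
\#\{(\bfv,\bfw): Q(\bfv)=Q(\bfw)\leqslant X\}=\sum_{n\leqslant X} r_2(n)^2\asymp X\log X .
\]
The missing step is the observation the paper's proof opens with. Non-EWAS forces $Q$ not to be proportional to a rational form, since otherwise a real multiple of $\difform{Q}$ would be split integral, hence EWAS. Then \cite[Lemma~10.3]{EMM05} gives at most four rational isotropic planes $\Pi$. Each meets $V_Q(X,2)$ in $\ll_Q X$ points, because $\Pi\cap\Z^4$ is a rank-$2$ lattice in $\Pi$. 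With this inserted, your first part goes through.
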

\begin{proof}
We first note that, since $\difform{Q}$ is split, the form $Q$ can't be a multiple of a rational form: In that case, a suitable multiple $x \difform{Q}$ of $\difform{Q}$ would be a split integral form, and therefore \textit{exactly} approximable by split forms (and accordingly EWAS). \par 
Let $c_Q^{-1}>0$ be the smallest eigenvalue of the Gram matrix $A_Q$ of $Q.$ Then, if a pair $(\bfv, \bfw)$ satisfies $Q(\bfv), Q(\bfw) \leqslant X$, we also have $\Vert \bfv \Vert^2 + \Vert \bfw \Vert^2 \leqslant 2 c_Q X.$ Using this, we apply \cite[Theorem 1.3]{EMM05} to the form $\difform{Q}$: in the notation of \cite{EMM05}, we take $\Omega$ equal to the open unit ball $B_\0 (1)^\circ \subseteq \R^4$ centered at the origin, $T = 2\smrt{ c_Q X}$, and $(a,b) = (-3,3).$ This shows that the number of points in $V_Q(X, 2)$ which are not contained in a rational $\difform{Q}$-isotropic plane is $\ll_Q X$. 

By \cite[Lemma 10.3]{EMM05} and our initial remark, there are at most four such planes. As each such plane $\Pi$ is a $2$-dimensional subspace, and the intersection $\Pi \cap \Z^4$ will be a lattice in $\Pi$ by rationality, each isotropic plane has $\ll_{\difform{Q}} X$ points in common with $V_Q(X, 2)$. Hence, we obtain $\# V_Q (X, 1) \leqslant \# V_Q (X, 2) \ll_Q X$, as claimed.\\[+0.5em]
For the second claim in the lemma, we choose a fixed $X$ and consider the numbers $n_Q (k) = |N_Q (k)|$ for $k = 0, 1, 2, \ldots$ where 
\begin{align*}
N_Q (k) 
:= 
\llebr \bfv \in \Z^2 \, \rriv \, 
0 < Q(\bfv) \leqslant X, \, \,
k \leqslant Q(\bfv) < k+1 \rribr.
\end{align*}
Since any two $\bfv, \bfw \in N_Q (k)$ satisfy $|Q(\bfv) - Q(\bfw)| < 1,$ the assumed bound on $\# V_Q (X,1)$ implies that
\begin{align} \label{eq:Qvalue_count_upper_bound}
\sum_{k \in \Z} n_Q (k)^2 
\leqslant 
\# V_Q (X, 1)
\ll_Q X.
\end{align}
On the other hand, if a pair $(\bfv, \bfw) \in \Z^4$ satisfies $|Q(\bfv) - Q(\bfw) |\leqslant Y,$ then $\bfv \in N_Q (k_1)$ and $\bfw \in N_Q (k_2)$ for non-negative integers $k_1, k_2$ with
\begin{align*} 
\lev k_1 - k_2 \riv 
\leqslant 
Y'
:=
\big\lceil Y \big\rceil + 1.
\end{align*}
Combining these facts and using the Cauchy{\textendash}Schwarz inequality, we deduce that
\begin{align*} 
\# V_Q (X, Y)
&\leqslant 
\sum_{|d| \leqslant Y'}
\sum_{k \geqslant 0} n_Q (k) n_Q (k+d)
\\ &\leqslant 
\sum_{|d| \leqslant Y'}
\lllep \, \sum_{k \geqslant 0} n_Q (k)^2\rrrip^{\!\! 1/2}
\lllep \, \sum_{k \geqslant 0} n_Q (k+d)^2\rrrip^{\!\! 1/2}
\breh &\leqslant 
\sum_{|d| \leqslant Y'}
\sum_{k \geqslant 0} n_Q (k)^2 
\breh &\ll_Q
\lep \big\lceil Y \big\rceil + 1 \rip X
\end{align*}
where we used \eqref{eq:Qvalue_count_upper_bound} in the last step. This completes the proof.
\end{proof}
\begin{remark} \label{remark:EWAS_bad_and_non_EWAS_good}
Note that, without any approximability assumptions on $\difform{Q},$ we only have $\# V_Q (X, Y) \ll_\Omega X(1+Y) \log X$ (where $\Omega$ is a compact set of binary positive-definite forms containing $Q$, cf. \cite[Theorem 5]{KKL}). The absence of a logarithmic factor in the estimate from Lemma \ref{lemma:counting_jointly_bounded_values} illustrates (as a complement to the general discussion in \cite{EMM05}) the significance of the Diophantine properties of $Q$ concerning the size of $V_Q (X, Y)$.
\end{remark}
Later, when estimating the weight functions appearing in the smooth functional equation for the Epstein zeta function, we will frequently encounter sums of the form below. Before discussing the zeta function in more detail, we therefore prove the following estimate which will be useful later on.
\begin{lemma}\label{lemma:estimate_this_maaan}
Let $Y > 0.$ Then for any $A > \sfr12$, we have
\begin{align*}
\sumprime_{\bfv \in \Z^2} \frac{1}{\sqrt{Q(\bfv)}} 
\lllep 1 + \frac{Q(\bfv)}{Y} \rrrip^{\! \! - A}
\ll_{Q, A}
\sqrt{Y}.
\end{align*}
\end{lemma}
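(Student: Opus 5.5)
We compare the sum with a radial integral. Since $Q$ is positive definite, there are constants $0 < c_1 \leqslant c_2$, depending only on $Q$, with $c_1 \Vert \bfv \Vert^2 \leqslant Q(\bfv) \leqslant c_2 \Vert \bfv \Vert^2$. Using these bounds in the summand, and noting that $(1+u/Y)^{-A}$ is decreasing in $u$, we have
\begin{align*}
\frac{1}{\sqrt{Q(\bfv)}} \lllep 1 + \frac{Q(\bfv)}{Y} \rrrip^{\! \! -A}
\leqslant
\frac{1}{\sqrt{c_1}\, \Vert \bfv \Vert} \lllep 1 + \frac{c_1 \Vert \bfv \Vert^2}{Y} \rrrip^{\! \! -A}.
\end{align*}
It therefore suffices to treat the Euclidean norm, with $Y$ replaced by $Y/c_1$. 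This changes $Y$ only by a constant factor depending on $Q$.

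Next we group the lattice points by dyadic shells $2^{j} \leqslant \Vert \bfv \Vert < 2^{j+1}$ for $j \geqslant 0$. Every nonzero integer vector has norm at least $1$, so these shells exhaust $\Z^2 \setminus \lbr \0 \rbr$. Each shell contains $\ll 4^{j}$ lattice points. On the $j$-th shell the summand is $\ll 2^{-j}\min\lep 1, (Y/4^{j})^{A}\rip$. Hence the sum is
\begin{align*}
\ll_Q \sum_{j \geqslant 0} 2^{j} \min\lep 1, Y^{A} 4^{-jA} \rip.
\end{align*}

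We split at the index $J$ with $4^{J} \approx Y$. For $j \leqslant J$ we bound the minimum by $1$, and the geometric sum contributes $\ll 2^{J} \ll \sqrt{Y}$. For $j > J$ the terms are $Y^{A} 2^{j(1-2A)}$, which decay geometrically precisely because $A > \sfr12$. This tail is therefore $\ll_A Y^{A} 2^{J(1-2A)} \ll \sqrt{Y}$.

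One degenerate case needs a separate check. If $Y < 1$, there is no index $j\le J$ with $j \geqslant 0$, and all terms lie in the tail. The tail is then $\ll_A Y^{A} \leqslant \sqrt{Y}$, since $A > \sfr12$ and $Y < 1$. This is the only point where some care is needed: the constant degenerates as $A \downarrow \sfr12$, and we must verify that the range $Y$ small is covered. Combining both ranges gives the bound $\ll_{Q,A} \sqrt{Y}$.
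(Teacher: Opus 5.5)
Your proof is correct and follows essentially the same route as the paper: a dyadic decomposition, the lattice-point count $\#\{\bfv \in \Z^2 : 0 < Q(\bfv) \leqslant X\} \ll_Q X$, and a geometric series that converges because $A > \sfr12$. The differences are cosmetic. You pass to the Euclidean norm, use absolute shells in $\Vert \bfv \Vert$ split at $4^J \approx Y$, and handle $Y<1$ via $\Vert \bfv \Vert \geqslant 1$. The paper works directly with shells $(2^jY, 2^{j+1}Y]$ and $(2^{-j-1}Y, 2^{-j}Y]$ in the values of $Q$.
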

\begin{proof}
Let $j \geqslant 0$ be fixed, and consider the values $Q(\bfv) \in (2^j Y, 2^{j+1} Y] = (U, 2U]$. The contribution of these values is
\begin{align*}
&\sum_{\bfv \in \Z^2} \one \! \leb U < Q(\bfv) \leqslant 2U \rib 
\frac{1}{\sqrt{Q(\bfv)}} 
\frac{1}{(1 + Q(\bfv)/Y)^A}
\\ &\qquad \leqslant 
\frac{1}{\sqrt{U}} 
\frac{1}{(1 + U/Y)^A}
\sum_{\bfv \in \Z^2} \one \! \leb U < Q(\bfv) \leqslant 2U \rib 
\ll_Q 
\frac{\sqrt{U}}{(1 + U/Y)^A},
\end{align*}
where we used the fact that $\# \lbr \bfv \in \Z^2 \mid 0 < Q(\bfv) \leqslant X \rbr \ll_Q X.$ We can estimate this upper bound further as
\begin{align*}
\frac{\sqrt{U}}{(1 + U/Y)^A}
=
\frac{\sqrt{2^j Y}}{(1 + 2^j)^A} 
\leqslant
2^{j (1/2 - A)} \sqrt{Y}.
\end{align*}
Summing this upper bound over all $j \geqslant 0$, we obtain a convergent geometric series (as $\sfr12 - A < 0$), and we conclude that the contribution from the values $Q(\bfv) > Y$ is $\ll_{Q,A} \smrt{Y},$ as claimed. \par 
For the values $Q(\bfv) \in (2^{-j-1} Y , 2^{-j} Y ]$ (where $j \geqslant 0$ is fixed), we can bound their contribution by $2^{-j/2} \smrt{Y}$ (up to a constant depending on $Q$). Then, summing this bound over all $j$, we see that the contribution from the values $Q(\bfv) \leqslant Y$ is $\ll_Q \smrt{Y}.$ 
\end{proof}

\subsection{Epstein zeta functions}
A classical analytical object associated to a positive-definite quadratic form $Q$ is its \bital{Epstein zeta function} $\epstein_Q (s)$ (see \cite{epstein-paperI, epstein-paperII}). For a binary form, this function is
\begin{align} \label{def:classical_epstein}
\epstein_{Q}(s) 
=
\epstein(Q, \Z^2 ; s)
=
\sumprime_{\bfv \in \Z^2} \frac{1}{Q (\bfv)^s} \qquad (\text{for } \re(s) > 1).
\end{align}

For the remainder of this subsection, we will let $Q$ denote a positive-definite binary quadratic form (as in the previous subsection). The function $\epstein_Q (s)$ then satisfies the following functional equation (\cite[Thm. 1.4.1]{terras}):
\begin{align} \label{eq:epstein_fcteq}
\pi^{-s} \Gamma(s) \epstein_Q (s) = (\det A_Q )^{-1/2} \pi^{s-1} \Gamma(1-s) \epstein_{Q^\ast} (1-s).
\end{align}

In addition to the identity \eqref{eq:epstein_fcteq}, $\epstein_Q$ also satisfies an approximate functional equation; indeed, several such estimates have been established in the literature: one example is \cite[Thm. 7]{KKL} (which is a specialization of \cite[Thm. 1]{SV05}), which uses sharp truncations of the series \eqref{def:classical_epstein}; and another version with \textit{smooth} truncations, due to Blomer, is provided by the following theorem from \cite[Eq. (2.1)]{blomer_epstein}:
\begin{theorem}[{\normalfont \cite{blomer_epstein}}, $k = 2$] \label{thm:blomer_approx_fcteq}
Let $|t| \geqslant 10.$ If $Q$ is a positive-definite binary quadratic form, and $\Gamma_\R(s) := \pi^{-s/2} \Gamma (\sfr{s}{2})$, then we have
\begin{align*}
\epstein_{Q}\lep \tfrac12 + it \rip
&=
\sumprime_{\bfv \in \Z^2}
\frac{W_t^+ \lep Q (\bfv) \rip }{Q(\bfv)^{1/2 + it}} 
+ 
\frac{1}{\sqrt{\det A_Q}}\frac{\Gamma_\R (1 - 2it)}{\Gamma_\R (1 + 2it)} 
\sumprime_{\bfw \in \Z^2} \frac{W_t^- \lep Q^\ast (\bfw) \rip }{Q^\ast (\bfw)^{1/2 - it}} 
+ O(1),
\end{align*}
where the weight functions $W_t^\pm (y)$ are given by
\begin{align} \label{def:smooth_weights}
W_t^{\pm} (y)
=
\frac{1}{2 \pi i} \int_{(1)} \exp (u^2) \frac{\Gamma(u+ \sfr12 \pm it)}{\Gamma(\sfr12 \pm it)} (\pi y)^{-u} \, u^{-1} \, \dd u.
\end{align}
\end{theorem}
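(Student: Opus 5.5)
The plan is the standard contour-shift derivation of an approximate functional equation, applied to the completed Epstein zeta function
\begin{align*}
\Lambda_Q(s) := \pi^{-s}\Gamma(s)\,\epstein_Q(s),
\end{align*}
which by \eqref{eq:epstein_fcteq} satisfies $\Lambda_Q(s) = (\det A_Q)^{-1/2}\Lambda_{Q^\ast}(1-s)$. Fix $|t|\geqslant 10$ and set $s_0 = \tfrac12+it$. I would consider
\begin{align*}
I(c) := \frac{1}{2\pi i}\int_{(c)} \exp(u^2)\,\frac{\Lambda_Q(s_0+u)}{\pi^{-s_0}\Gamma(s_0)}\,\frac{\dd u}{u}
\end{align*}
for $c = \pm 1$. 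The factor $\exp(u^2)$ decays like $e^{-(\im u)^2}$ on vertical lines. By Stirling, this beats the polynomial growth of the Gamma ratio, and $\epstein_Q$ has at most polynomial growth in vertical strips. So all integrals converge absolutely, and the horizontal segments vanish when we shift contours.

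On $\re(u)=1$ we have $\re(s_0+u)=\tfrac32>1$, so I expand $\epstein_Q(s_0+u)$ as its Dirichlet series and interchange sum and integral. Since $\pi^{-(s_0+u)}\Gamma(s_0+u)/(\pi^{-s_0}\Gamma(s_0)) = (\pi)^{-u}\Gamma(u+\tfrac12+it)/\Gamma(\tfrac12+it)$, this gives exactly
\begin{align*}
I(1) = \sumprime_{\bfv} W_t^+(Q(\bfv))\,Q(\bfv)^{-1/2-it},
\end{align*}
with $W_t^+$ as in \eqref{def:smooth_weights}.

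Next I shift the contour to $\re(u)=-1$ and collect residues. The pole at $u=0$ contributes exactly $\epstein_Q(\tfrac12+it)$. The poles of $\Lambda_Q$ at $s=1$ and $s=0$ sit at $u=\tfrac12-it$ and $u=-\tfrac12-it$. Their residues carry the factor $|\exp(u^2)| = e^{1/4-t^2}$, against a Gamma ratio of size at most polynomial (indeed $\ll e^{\pi|t|/2}$) in $t$, so they are $O(1)$, in fact exponentially small. This yields $\epstein_Q(s_0) = I(1) - I(-1) + O(1)$.

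On $\re(u)=-1$ I apply the functional equation $\Lambda_Q(s_0+u) = (\det A_Q)^{-1/2}\Lambda_{Q^\ast}(\tfrac12-it-u)$ and substitute $u\mapsto -u$. Because $\exp(u^2)$ is even and $\dd u/u$ is invariant, the orientation reversal turns $-I(-1)$ into an integral over $\re(u)=1$ of
\begin{align*}
(\det A_Q)^{-1/2}\exp(u^2)\,\frac{\Lambda_{Q^\ast}(\tfrac12-it+u)}{\pi^{-s_0}\Gamma(s_0)}\,\frac{1}{u}.
\end{align*}
Now $\re(\tfrac12-it+u)=\tfrac32$, so I expand the Dirichlet series of $\epstein_{Q^\ast}$ there. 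I factor out
\begin{align*}
\frac{\pi^{-(1/2-it)}\Gamma(\tfrac12-it)}{\pi^{-(1/2+it)}\Gamma(\tfrac12+it)} = \frac{\Gamma_\R(1-2it)}{\Gamma_\R(1+2it)}.
\end{align*}
What remains, termwise, is precisely $W_t^-(Q^\ast(\bfw))\,Q^\ast(\bfw)^{-1/2+it}$. This produces the second sum in the statement.

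The only genuinely delicate point is the bookkeeping: the residues at the poles of $\Lambda_Q$ and the uniformity in $t$ of the convergence needed to justify the shift and the sum–integral interchanges. Both are controlled by the Gaussian decay of $\exp(u^2)$ together with Stirling's formula, so I expect no real obstacle beyond this.
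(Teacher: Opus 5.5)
Your proof is correct and follows the same contour-shift argument the paper uses for its sublattice analogue, Theorem~\ref{thm:sublattice_epstein_approx_fcteq}; the statement itself is only quoted from Blomer, without proof. As there, you integrate the completed zeta function against $\exp(u^2)u^{-1}$ and move the contour from $(1)$ to $(-1)$, which picks up $\epstein_Q(\tfrac12+it)$ at $u=0$ together with exponentially small residues from the poles at $s=0,1$; the left integral is then folded back onto $(1)$ via the functional equation and $u\mapsto -u$. One wording fix: the Gamma ratio $\Gamma(u+\tfrac12+it)/\Gamma(\tfrac12+it)$ on vertical lines, and the factor $1/\Gamma(\tfrac12+it)$ in the residues, grow exponentially (like $e^{\pi|\im u|/2}$ and $e^{\pi|t|/2}$), not polynomially, but the Gaussian factor still dominates, so nothing changes.
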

\begin{remark} \label{remark:weight_bounds}
By applying a standard argument using Stirling's approximation and contour-shifting (e.g., see Young's argument in \cite[Eq. (4.3)]{young_levinson}) to the functions in \eqref{def:smooth_weights}, one can prove that the weights $W_t^{\pm}$ satisfy the following bounds, for any $A, y > 0$ and any non-negative integer $j$: If $t \in [\sfr12 U, 3 U ]$ for $U \geqslant 2$, then
\begin{align*} 
U^j \lllev \frac{\partial^j}{\partial t^j} W^{\pm}_t (y)\rrriv
\ll_{A, j} \frac{1}{(1 + y/U)^A}.
\end{align*}
Furthermore, if $|t| \leqslant 2,$ then $W^{\pm}_t (y) \ll_A (1+y)^{-A}.$
\end{remark}
\begin{remark}
One benefit of smooth weights in the approximate functional equation is the potential resulting improvement of the error term. In this regard, note that the admittedly crude error term $O(1)$ can indeed be sharpened (as is also remarked in \cite{blomer_epstein}), and quite substantially so. This requires a more careful handling of the poles of $\epstein_Q (s)$ in the functional equation. We will carry out this argument below from the perspective of the functions $\epstein(Q, \Lambda; s)$ that we introduce next.
\end{remark}
We will be interested in the following \textit{sublattice variant} of $\epstein_Q,$ where only lattice points from a (finite-index) sublattice $\Lambda \subseteq \Z^2$ are included in the sum:  
\begin{align} \label{def:sublattice_epstein}
\epstein(Q, \Lambda ; s)
:=
\sumprime_{\bfv \in \Lambda} \frac{1}{Q (\bfv)^s}.
\end{align}
Note that \eqref{def:sublattice_epstein} is not a new object compared to the classical Epstein zeta function discussed above, as one can express the sublattice variant under consideration in the form of \eqref{def:classical_epstein} by taking the quadratic form $Q' (\bfv) = Q (B \bfv)$ when $\Lambda = B \Z^2$. We elaborate on this in the following lemma.
\begin{lemma} \label{lemma:epstein_identity_sublattice_dual}
Let $\Lambda = B \Z^2 \subseteq \Z^2$ be a sublattice of index $[\Z^2 : \Lambda] = m$. Then the quadratic form $Q' (\bfv) := m^{-1} Q(B \bfv)$ has the same determinant as $Q$; its Epstein zeta function satisfies
\begin{align}
\epstein ( Q', \Z^2 ; s) 
=
m^s \epstein (Q, \Lambda, s),
\end{align}
and when $\det A_Q = 1$, its dual form is $(Q')^\ast = Q' \circ J$ where $J = \begsmat 0 & -1 \\ 1 & 0 \endsmat.$ 
\end{lemma}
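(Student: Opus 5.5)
The plan is to verify the three claims of Lemma \ref{lemma:epstein_identity_sublattice_dual} by direct linear algebra; no analytic input is needed. Write $\Lambda = B\Z^2$ with $B \in M_2(\Z)$. Since $\Lambda$ has index $m$ in $\Z^2$, we have $|\det B| = m$. After replacing $B$ by $B\begsmat 1 & 0 \\ 0 & -1 \endsmat$ if necessary, which changes neither $\Lambda$ nor the values $Q(B\bfv)$ up to relabelling $\bfv$, we may assume $\det B = m$.

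\textbf{Determinant.} The Gram matrix of $Q'$ is $A_{Q'} = m^{-1} B^\intercal A_Q B$. Its determinant is therefore $m^{-2} (\det B)^2 \det A_Q = \det A_Q$.

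\textbf{Zeta identity.} The map $\bfv \mapsto B\bfv$ is a bijection from $\Z^2 \setminus \lbr \0 \rbr$ onto $\Lambda \setminus \lbr \0 \rbr$. Hence, for $\re(s) > 1$,
\begin{align*}
\epstein(Q',\Z^2;s)
= \sumprime_{\bfv\in\Z^2} m^{s} Q(B\bfv)^{-s}
= m^s \epstein(Q,\Lambda;s).
\end{align*}
The identity then extends to all $s$ by meromorphic continuation.

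\textbf{Dual form.} Assume $\det A_Q = 1$. By the first step $\det A_{Q'} = 1$ as well, and $A_{Q'}$ is a symmetric real $2\times 2$ matrix. For any $2\times2$ matrix $M$ we have the adjugate identity $M^{-1} = (\det M)^{-1} J^\intercal M^\intercal J$. Applied to the symmetric matrix $A_{Q'}$ with determinant $1$, this gives $A_{Q'}^{-1} = J^\intercal A_{Q'} J$. One can check this directly: for $M = \begsmat a & b \\ b & c \endsmat$ one has $J^\intercal M J = \begsmat c & -b \\ -b & a \endsmat$. Consequently
\begin{align*}
(Q')^\ast(\bfv) = \bfv^\intercal A_{Q'}^{-1}\bfv = (J\bfv)^\intercal A_{Q'} (J\bfv) = Q'(J\bfv).
\end{align*}

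None of these steps presents a real obstacle. The only point needing care is the normalisation $\det B = m$, and even that is harmless because only $(\det B)^2$ enters the computation.
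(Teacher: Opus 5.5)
Your proposal is correct and follows the paper's proof essentially step by step: the determinant via $m^{-2}(\det B)^2\det A_Q$, the zeta identity via the bijection $\bfv\mapsto B\bfv$, and the dual form via $M^{-1}=J^\intercal M J$ for symmetric $M$ of determinant $1$. The normalisation $\det B = m$ is never used (and it would replace $Q'$ by $Q'\circ\mathrm{diag}(1,-1)$), so you can simply drop it.
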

\begin{proof}
The Gram matrix of $Q'$ is $m^{-1} B^\intercal A_Q B$ with determinant $m^{-2} (\det B)^2 \det A_Q,$ and the first claim follows. \par 
Next, the Epstein zeta function of $Q'$ is 
\begin{align*}
\epstein (Q', \Z^2; s) 
=
\sumprime_{\bfv \in \Z^2}
\lllep \frac{m}{Q(B \bfv)} \rrrip^s
=
m^s \sumprime_{\bfv \in \Lambda} \frac{1}{Q (\bfv)^s}
=
m^s \epstein (Q, \Lambda, s),
\end{align*}
and we obtain the second claim. \par 
The final claim concerning the dual form of $Q'$ follows from the general fact that the inverse of an invertible symmetric matrix $M \in \SL_2(\R)$ is $J^\intercal M J$.
\end{proof}
The following lemma allows us to keep the quadratic form $Q_z$ fixed, and keep track of the level $p$ in a sublattice of $\Z^2.$
\begin{lemma} \label{lemma:level_in_sublattice}
The map $w \mapsto \epstein(Q_w , \Z^2 ; s)$ is invariant under left-translation by $\Gamma$. Also, for $\gamma \in \Gamma,$ the Epstein zeta function associated to $\Lambda_\gamma := \gamma^\intercal (p \Z \times \Z)$ satisfies
\begin{align*}
\epstein (Q_w, \Lambda_\gamma ; s) 
=
\epstein (Q_{\gamma w}, \Lambda_p; s).
\end{align*} 
\end{lemma}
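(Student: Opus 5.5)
The plan is to reduce both claims to how the Gram matrix $A_w$ transforms under $\gamma\in\Gamma=\SL_2(\Z)$. The key identity I would establish first is
\begin{align*}
Q_{\gamma w}(\bfv) = Q_w(\gamma^\intercal \bfv) \qquad (\bfv \in \R^2),
\end{align*}
possibly up to replacing $\gamma$ by a closely related matrix such as $\gamma^{-1}$ or $J\gamma J^{-1}$, depending on the convention for writing $\bfv$. To verify it, write $\gamma = \begsmat a & b \\ c & d \endsmat$ and use $\im(\gamma w) = \im(w)/|cw+d|^2$ together with
\begin{align*}
v_1 \gamma w + v_2 = \frac{(v_1 a + v_2 c) w + (v_1 b + v_2 d)}{cw+d}.
\end{align*}
This gives
\begin{align*}
Q_{\gamma w}(\bfv) = \frac{|(a v_1 + c v_2) w + (b v_1 + d v_2)|^2}{\im(w)} = Q_w(\gamma^\intercal \bfv),
\end{align*}
since $\gamma^\intercal \bfv = (a v_1 + c v_2,\, b v_1 + d v_2)^\intercal$. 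So the transpose arises naturally, which matches the lattice $\Lambda_\gamma = \gamma^\intercal(p\Z\times\Z)$ in the statement.

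\textbf{First claim.} Since $\gamma^\intercal \in \SL_2(\Z)$ is a bijection of $\Z^2\setminus\{\0\}$, reindexing the absolutely convergent series (for $\re(s)>1$) by $\bfu = \gamma^\intercal\bfv$ gives
\begin{align*}
\epstein(Q_{\gamma w},\Z^2;s) = \epstein(Q_w,\Z^2;s).
\end{align*}
The identity extends to all $s$ by meromorphic continuation.

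\textbf{Second claim.} Run the same substitution but restricted to $\bfv \in \Lambda_p$:
\begin{align*}
\epstein(Q_{\gamma w},\Lambda_p;s) = \sumprime_{\bfv\in\Lambda_p} Q_w(\gamma^\intercal\bfv)^{-s} = \sumprime_{\bfu\in\gamma^\intercal\Lambda_p} Q_w(\bfu)^{-s} = \epstein(Q_w,\Lambda_\gamma;s).
\end{align*}
Again this holds for $\re(s)>1$ and then everywhere by continuation.

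The only real obstacle is bookkeeping: confirming that the transpose, rather than the inverse or some conjugate, is the correct matrix for the paper's convention $Q_z(\bfv)=|v_1z+v_2|^2/\im z$. The explicit computation above settles this. If a different convention were intended, the same argument works with $\Lambda_\gamma$ adjusted to match.
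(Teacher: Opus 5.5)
Your proposal is correct and matches the paper's proof: you derive $Q_{\gamma w}(\bfv) = Q_w(\gamma^\intercal \bfv)$ from $\im(\gamma w) = \im(w)/|cw+d|^2$ and the Möbius formula, then reindex both sums using that $\gamma^\intercal$ is an automorphism of $\Z^2$ carrying $\Lambda_p$ onto $\Lambda_\gamma$. Your hedge about possibly needing $\gamma^{-1}$ or a conjugate instead is unnecessary, since your own explicit computation already confirms that the transpose is the correct matrix.
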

\begin{proof}
Let $\gamma = \begsmat a & b \\ c & d \endsmat$ and recall the standard relations
\begin{align} \label{eq:impart_transformation_law}
\im (\gamma w) = \frac{\im (w)}{|cw + d|^2},
\qquad 
\begmat \gamma w \\ 1 \endmat
=
\frac{1}{cw + d} \cdot \gamma \begmat w \\ 1 \endmat
\end{align}
for the Möbius action. Taking $z = \gamma w$ in \eqref{eq:quadratic_form_Qz}, we see that $Q_{\gamma w}$ maps $\bfv \in \Z^2$ to 
\begin{align*}
\frac{|\bfv \cdot (\gamma w, 1)^\intercal |^2}{\im( \gamma w)}
=
\frac{1}{|cw+d|^2} \frac{|\gamma^\intercal \bfv \cdot (w, 1)^\intercal|^2}{\im (\gamma w)}
=
\frac{|\gamma^\intercal \bfv \cdot (w, 1)^\intercal|^2}{\im (w)}
=
Q_w (\gamma^\intercal \bfv).
\end{align*}
Since $\gamma^\intercal$ acts as an automorphism of the lattice $\Z^2$, both claims follow.
\end{proof}

From \eqref{eq:epstein_fcteq} and Lemma \ref{lemma:epstein_identity_sublattice_dual}, we obtain the following functional equation for the sublattice Epstein zeta function \eqref{def:sublattice_epstein}: When $\Lambda \subseteq \Z^2$ has index $m,$ we have
\begin{align} \label{eq:sublattice_epstein_fcteq}
\lllep \frac{m}{\pi} \rrrip^{\! \! s} \Gamma(s) \epstein (Q, \Lambda; s)
=
\lllep \frac{m}{\pi} \rrrip^{\!\!  1-s} \Gamma(1-s) \epstein(Q, \Lambda; 1-s).
\end{align}
(The absence of the dual lattice $\Lambda^\ast$ on the right-hand side is explained by the general fact that two-dimensional lattices are always similar to their duals.) 

In addition to the functional equation \eqref{eq:sublattice_epstein_fcteq}, we will need an approximate functional equation for $\epstein( Q, \Lambda; s).$ As announced above, we can modify Theorem \ref{thm:blomer_approx_fcteq} and obtain the following estimate with a sharp error term.

\begin{theorem} \label{thm:sublattice_epstein_approx_fcteq}
Let $Q$ be a positive-definite binary quadratic form with determinant $1$, and let $\Lambda  = M \Z^2$ be a sublattice in $\Z^2$ with index $m$. With the same notation as in Theorem \ref{thm:blomer_approx_fcteq}, for every $t \in \R$ we have
\begin{align*}
\epstein \lep Q, \Lambda; \tfrac12 + it \rip
&=
\sumprime_{\bfv \in \Lambda} 
\frac{W_{t}^+ (Q(\bfv)/m)}{Q(\bfv)^{1/2 + it}} +
\chi_m \lep \tfrac12 + it\rip 
\sumprime_{\bfv \in \Lambda} 
\frac{W_{t}^- (Q(\bfv)/m)}{Q(\bfv)^{1/2 - it}} 
\\ &\qquad + \, 
R_m \lep \tfrac12 + it \rip,
\end{align*}
where, for $s_0 = \sfr12 + it$, 
\begin{align*}
\chi_m \lep \tfrac12 + it \rip
= 
\llep \fr{\pi}{m} \rrip^{\! 2 i t} \frac{\Gamma (\sfr12 - it)}{\Gamma(\sfr12 + it)},
\qquad
R_m \lep s_0 \rip
=
- \frac{\pi^{s_0}}{m^{s_0} \Gamma(s_0)} \lllep \frac{e^{s_0^2}}{s_0} + \frac{e^{(1-s_0)^2}}{1-s_0} \rrrip.
\end{align*}
Moreover, $|\chi_m (\sfr12 + it )| = 1,$ and $|R_m (\sfr12 + it) | \ll m^{-1/2} e^{-t^2 / 2}.$
\end{theorem}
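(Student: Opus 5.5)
We prove Theorem \ref{thm:sublattice_epstein_approx_fcteq} by the standard contour-shift derivation of a smooth approximate functional equation, applied to the completed function
\begin{align*}
\Lambda_m(s) := \lllep \frac{m}{\pi} \rrrip^{\! s} \Gamma(s)\, \epstein(Q, \Lambda; s),
\end{align*}
which by \eqref{eq:sublattice_epstein_fcteq} satisfies $\Lambda_m(s) = \Lambda_m(1-s)$. Fix $s_0 = \sfr12 + it$ and form
\begin{align*}
I := \frac{1}{2\pi i}\int_{(1)} \Lambda_m(s_0+u)\, e^{u^2}\, \frac{\dd u}{u}.
\end{align*}
On $\re(u)=1$ we expand $\epstein(Q,\Lambda;s_0+u)$ as its absolutely convergent Dirichlet series and integrate term by term. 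After dividing by $(m/\pi)^{s_0}\Gamma(s_0)$, each $\bfv \in \Lambda\setminus\{\0\}$ contributes $Q(\bfv)^{-s_0}$ times
\begin{align*}
\frac{1}{2\pi i}\int_{(1)} e^{u^2}\frac{\Gamma(s_0+u)}{\Gamma(s_0)}\lep \pi Q(\bfv)/m \rip^{-u}\frac{\dd u}{u} = W_t^+\lep Q(\bfv)/m\rip.
\end{align*}
This is exactly \eqref{def:smooth_weights} with $y = Q(\bfv)/m$.

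Next we shift the contour to $\re(u) = -1$. The rapid decay of $e^{u^2}$ in vertical strips dominates the polynomial growth of $\Gamma\cdot\epstein$, so the horizontal segments vanish. The integrand has three poles to account for.
\begin{itemize}
\item At $u=0$ the residue is $\Lambda_m(s_0)$.
\item At $u = 1-s_0$ the pole comes from the simple pole of $\epstein(Q,\Lambda;s)$ at $s=1$.
\item At $u = -s_0$ the pole comes from $\Gamma(s)$ at $s = 0$.
\end{itemize}
The poles at $u=1-s_0$ and $u=-s_0$ lie at real part $\pm\tfrac12$, so both are crossed. On the new line we use $\Lambda_m(s_0+u)=\Lambda_m(1-s_0-u)$ and substitute $u\mapsto -u$; since $e^{u^2}/u$ is odd up to sign, we obtain $-$ the analogous integral for $1-s_0 = \sfr12 - it$ on $\re(u)=1$. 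Expanding that as a Dirichlet series gives the dual sum with weights $W_t^-(Q(\bfv)/m)$. It also gives the prefactor
\begin{align*}
\frac{(m/\pi)^{1-s_0}\Gamma(1-s_0)}{(m/\pi)^{s_0}\Gamma(s_0)} = \chi_m(s_0).
\end{align*}
The modulus $|\chi_m(s_0)| = 1$ follows because $\Gamma(\sfr12-it) = \overline{\Gamma(\sfr12+it)}$.

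The main point, and the only real work, is computing the residues at the two extra poles exactly; this is where Blomer's $O(1)$ gets sharpened. The residue of $\Lambda_m(s)$ at $s=1$ is $\frac{m}{\pi}\cdot \mathrm{res}_{s=1}\epstein(Q,\Lambda;s)$. Since $\det A_Q = 1$, the classical residue is $\pi/\sqrt{\det A_Q} = \pi$ for the full lattice, so the lattice of index $m$ has residue $\pi/m$. Hence $\mathrm{res}_{s=1}\Lambda_m = 1$. By the functional equation, $\mathrm{res}_{s=0}\Lambda_m = -1$; this is consistent with $\epstein(Q,\Lambda;0)=-1$. The two residues of the integrand are therefore
\begin{align*}
\frac{e^{(1-s_0)^2}}{1-s_0} \quad \text{at } u = 1-s_0, \qquad -\frac{e^{s_0^2}}{-s_0} = \frac{e^{s_0^2}}{s_0} \quad \text{at } u = -s_0.
\end{align*}
Moving these to the other side and dividing by $(m/\pi)^{s_0}\Gamma(s_0)$ yields precisely $R_m(s_0)$. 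The sign bookkeeping will be checked carefully, including the orientation of the shift.

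Finally, for the bound on $R_m$: $|(\pi/m)^{s_0}| = (\pi/m)^{1/2}$. Also $\re(s_0^2) = \re((1-s_0)^2) = \tfrac14 - t^2$, so both exponentials have modulus $e^{1/4 - t^2}$. Stirling gives $|\Gamma(\sfr12+it)|^{-1} = (\cosh \pi t/\pi)^{1/2} \ll e^{\pi|t|/2}$. Multiplying these and absorbing $e^{\pi|t|/2 - t^2/2}$ into a constant gives $|R_m(s_0)| \ll m^{-1/2}e^{-t^2/2}$. No restriction $|t|\geqslant 10$ is needed, since every step is exact.
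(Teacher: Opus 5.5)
Your proposal is correct and follows the paper's proof essentially step for step. Both use the same completed function $(m/\pi)^s\Gamma(s)\epstein(Q,\Lambda;s)$, integrate it against $e^{u^2}u^{-1}$ on $\re(u)=1$, convert the $\re(u)=-1$ integral into $-I_m(1-s_0)$ via the functional equation, pick up residues $1$ and $-1$ at $s=1$ and $s=0$, and divide by $(m/\pi)^{s_0}\Gamma(s_0)$. Your explicit residue computation ($\pi/m$ for the index-$m$ lattice) and the Stirling estimate for $|R_m|$ fill in details that the paper delegates to a citation and to the standard bound for the gamma function on the critical line.
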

\begin{proof}
Write $\smash{\widetilde{\epstein}}(Q, \Lambda; s)$ for the completed zeta function on the left-hand side of \eqref{eq:sublattice_epstein_fcteq}. This is holomorphic on $\C \setminus \lbr 0, 1 \rbr$ and has poles at $s = 0, 1$ with residues $-1$ and $1$, respectively (cf. \cite[Sect. 3]{terrasbesselseries}). Therefore the integrand in
\begin{align*}
I_m (s) 
:=
\frac{1}{2 \pi i} \int_{(1)} \widetilde{\epstein} \lep Q, \Lambda; s+u \rip
\exp(u^2) \, u^{-1} \, \dd u
\end{align*}
has poles at $u \in \lbr -s, 1-s, 0 \rbr$. If we let $I_m'(s)$ denote the analogue of $I_m (s)$ with the contour $(-1)$ instead, then the functional equation \eqref{eq:sublattice_epstein_fcteq} shows (in combination with the substitution $u \rightsquigarrow -u$) that $I_m'(s) = -I_m (1-s).$ Then, the rapid decay of $\exp(u^2)$ on vertical lines and the residue theorem give
\begin{align*}
I_m (s) + I_m (1-s)
&=
\frac{e^{s^2}}{s} 
+ \frac{e^{(1-s)^2}}{1-s} 
+ \widetilde{\epstein} \lep Q, \Lambda; s \rip.
\end{align*}
We then recover the approximate functional equation by dividing by $(m/\pi)^s \Gamma(s)$ on both sides of this relation and isolating the resulting term $\epstein(Q, \Lambda, \sfr12 + it)$. (Note that for $\re(u) = 1,$ we can write $\epstein(Q, \Lambda, \sfr12 + it + u)$ in terms of its absolutely convergent infinite series, and may therefore change the order of summation and integration.) \par 
Finally, the bound on $R_m (\sfr12 + it)$ now follows from the argument above by estimating the exponential residues for $s = \sfr12 + it$ and using the well-known estimate of the gamma function on the critical line. This completes the proof.
\end{proof}
\subsection[Eisenstein series and cusps of $\Gamma_0 (p)$]{Eisenstein series and cusps of $\,\boldsymbol{\Gamma_0 (p)}$}
Since $\Gamma_0 (p)$ has prime level $p$, it has exactly two inequivalent cusps: $\cusp = 0$ and $\cusp = \infty$ (see \cite[Prop. 2.6]{iwaniectopics}). Their stabilizers with respect to the Möbius action $\Gamma_0 (p) \curvearrowright \hyp$ are given, respectively, by
\begin{align*}
\Gamma_0 = \lbr 
\pm \begin{pmatrix}
1 & 0 \\
p n & 1 
\end{pmatrix} \, \rriv\,\, n \in \Z \rbr,
\quad
\Gamma_\infty =  \lbr 
\pm \begin{pmatrix}
1 & n \\
0 & 1 
\end{pmatrix} \, \rriv \,\, n \in \Z \rbr.
&&
\end{align*}
For $z \in X_0(p)$, the Eisenstein series $E_\cusp (s,z)$ related to a cusp $\cusp$ is 
\begin{align*}
E_\cusp (s,z) 
= 
\sum_{\gamma \in \Gamma_\cusp \backslash \Gamma_0 (p)} 
\im \lep \sigma_\cusp^{-1} \gamma  \cdot z \rip ^s
\qquad (\text{for } \re(s) > 1),
&&
\end{align*}
where $\sigma_\cusp \in \SL(2, \R)$ denotes the \textit{scaling matrix} of $\cusp$. In our cases, we have 
\begin{align*}
\sigma_0 = \begin{pmatrix}
0 & -1/\sqrt{p} \\
\sqrt{p} & 0
\end{pmatrix},
\qquad
\sigma_\infty = \begin{pmatrix}
1 & 0 \\
0 & 1
\end{pmatrix}.
&&
\end{align*}
A straightforward computation now realizes each of the two Eisenstein series (when $\re(s) > 1$) as a sum over a suitable subset of the primitive lattice points: Using \eqref{eq:impart_transformation_law} and the bijections
\begin{align*} 
\Gamma_\infty \backslash \Gamma_0 (p) 
&\longleftrightarrow  
\lebr (pc, d) \in \Z_\mathrm{pr}^2 \ribr / \lbrace \pm 1 \rbrace , \\
\Gamma_0 \backslash \Gamma_0 (p)
&\longleftrightarrow  
\lebr (a,b) \in \Z_\mathrm{pr}^2 : p \nmid a \ribr / \lbrace \pm 1 \rbrace, 
\end{align*} 
we can write 
\begin{align} \label{eq:p_Eisenstein_series}
E_\infty(s,z) 
=
\frac12 \sum_{\substack{\bfv \in \Z^2_\prim \\ p \mid v_1}} \frac{1}{Q_z(\bfv)^s},
\quad
E_0 (s, z)  = \frac{1}{2p^s} \sum_{\substack{\bfv \in \Z_\prim^2 \\ p \nmid v_1}}  \frac{1}{Q_z(\bfv)^s}.
\end{align}
Both functions $E_\cusp (s,z)$ (viewed as functions of the variable $s$) admit meromorphic continuations to all $s \in \C$ and are therefore meaningful outside of the region of convergence of the series \eqref{eq:p_Eisenstein_series}.\par 
We can relate these series to sums over all non-zero lattice points in the form of Epstein zeta functions (attached to certain sublattices): 
\begin{lemma}\label{lemma:clothesline}
Let $\Lambda_p = p \Z \times \Z \subseteq \Z^2.$ Then the Eisenstein series $E_\cusp (s,z)$ of $\Gamma_0 (p)$ satisfy the following identities (understood as equalities of meromorphic functions):
\begin{align}
E_\infty (s,z) \label{eq:clothes_infty}
&=
\frac{\epstein (Q_z, \Lambda_p ; s) - p^{-2s} \epstein(Q_z, \Z^2 ; s)}{2 \zeta(2s)(1 - p^{-2s})},\breh
E_0 (s,z) 
&= \label{eq:clothes_0}
\frac{1}{p^s} 
\frac{\epstein (Q_z, \Z^2 ; s) - \epstein(Q_z, \Lambda_p ; s)}{2 \zeta(2s)(1 - p^{-2s})}.
\end{align}
\end{lemma}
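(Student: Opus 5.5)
We will prove the two identities first for $\re(s) > 1$, where every series converges absolutely, and then extend them to all of $\C$ by meromorphic continuation. The idea is to sort the non-zero lattice vectors by their greatest common divisor. This isolates the primitive vectors, which is where the factor $\zeta(2s)$ enters, and then to use an inclusion--exclusion in $p$ to separate the conditions $p \mid v_1$ and $p \nmid v_1$.

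The first step is the classical decomposition. Every $\bfv \in \Lambda_p \setminus \lbr \0 \rbr$ can be written uniquely as $\bfv = n \bfu$ with $n \geqslant 1$ and $\bfu \in \Z^2_\prim$. Whether $\bfv$ lies in $\Lambda_p$ depends on both $n$ and $\bfu$:
\begin{itemize}
\item if $p \nmid n$, then $\bfv \in \Lambda_p$ exactly when $p \mid u_1$;
\item if $p \mid n$, then $\bfv \in \Lambda_p$ for every primitive $\bfu$.
\end{itemize}
Write $S_1(s) = \sum_{\bfu \in \Z^2_\prim,\, p \mid u_1} Q_z(\bfu)^{-s}$ and $S_2(s) = \sum_{\bfu \in \Z^2_\prim,\, p\nmid u_1} Q_z(\bfu)^{-s}$. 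Using homogeneity, $Q_z(n\bfu) = n^2 Q_z(\bfu)$, we obtain
\begin{align*}
\epstein(Q_z,\Z^2;s) &= \zeta(2s)\lep S_1 + S_2 \rip, \\
\epstein(Q_z,\Lambda_p;s) &= \zeta(2s)\lep 1 - p^{-2s}\rip S_1 + p^{-2s}\zeta(2s)\lep S_1+S_2\rip \\
&= \zeta(2s)\lep S_1 + p^{-2s} S_2\rip.
\end{align*}
The factor $1 - p^{-2s}$ comes from restricting the sum over $n$ to integers coprime to $p$.

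The second step is to solve this $2\times 2$ linear system. From the two equations,
\begin{align*}
\zeta(2s)\lep 1-p^{-2s}\rip S_1 &= \epstein(Q_z,\Lambda_p;s) - p^{-2s}\epstein(Q_z,\Z^2;s), \\
\zeta(2s)\lep 1-p^{-2s}\rip S_2 &= \epstein(Q_z,\Z^2;s) - \epstein(Q_z,\Lambda_p;s).
\end{align*}
By \eqref{eq:p_Eisenstein_series} we have $E_\infty = \tfrac12 S_1$ and $E_0 = \tfrac{1}{2p^s} S_2$. Substituting gives \eqref{eq:clothes_infty} and \eqref{eq:clothes_0} on the half-plane $\re(s)>1$.

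The final step is the extension to all $s$. Both sides are meromorphic in $s$: the left-hand sides by the continuation of the Eisenstein series, and the right-hand sides because $\epstein$ (including its sublattice version, via Lemma \ref{lemma:epstein_identity_sublattice_dual}) and $\zeta$ continue meromorphically. Two meromorphic functions that agree on a half-plane agree everywhere by the identity theorem. There is no real obstacle in this argument. The only step that needs care is the bookkeeping for the case $p \mid n$, which is what produces the $p^{-2s}$ terms.
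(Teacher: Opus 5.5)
Your proof is correct. For \eqref{eq:clothes_infty} it is the same as the paper's: both write $\mathbf{v}=n\mathbf{u}$ with $\mathbf{u}$ primitive and split according to whether $p\mid n$.

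For \eqref{eq:clothes_0} the two routes differ. You also decompose $\epstein(Q_z,\Z^2;s)=\zeta(2s)(S_1+S_2)$, solve the $2\times 2$ system, and read off $E_0=\tfrac{1}{2p^s}S_2$ directly from the series representation \eqref{eq:p_Eisenstein_series}. The paper never uses that series for $E_0$. Instead it proceeds as follows:
\begin{itemize}
\item it uses $Q_z(pv_1,v_2)=pQ_{pz}(v_1,v_2)$ to rewrite \eqref{eq:clothes_infty} as $E_\infty(s,z)=\bigl(p^sE_\Gamma(s,pz)-E_\Gamma(s,z)\bigr)/(p^{2s}-1)$;
\item it applies the Fricke involution, $E_0(s,z)=E_\infty(s,-1/(pz))$, together with the invariance of $E_\Gamma$ under $z\mapsto -1/z$;
\item it deduces $E_\Gamma(s,z)=E_\infty(s,z)+p^sE_0(s,z)$, which combined with \eqref{eq:clothes_infty} gives \eqref{eq:clothes_0}.
\end{itemize}
In your approach that last relation is immediate: it is just $\tfrac12(S_1+S_2)=E_\Gamma$, i.e.\ splitting the primitive vectors by whether $p\mid v_1$.

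Your route is shorter and treats both cusps symmetrically from a single decomposition. It does rest on the $p^{-s}$ normalization of $E_0$ in \eqref{eq:p_Eisenstein_series}, which the paper only asserts as a straightforward computation. That normalization is correct: for $\gamma\in\Gamma_0(p)$ with top row $(a,b)$ one has $\im(\sigma_0^{-1}\gamma z)=\im(z)/(p\,|az+b|^2)$, with $(a,b)$ primitive and $p\nmid a$.

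The paper's route instead derives the $E_0$ identity from the structural facts that $\sigma_0$ normalizes $\Gamma_0(p)$ and swaps the two cusps. Along the way it gives expressions for $E_\infty$ and $E_0$ in terms of level-one Eisenstein series at $z$ and $pz$. In effect it also gives an independent check of that normalization.
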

\begin{proof}
Let us write $\epstein_{p} (s,z) := \epstein(Q_z, \Lambda_p ; s)$ and $\epstein (s,z) := \epstein(Q_z, \Z^2 ; s).$ It will be enough to establish \eqref{eq:clothes_infty} and \eqref{eq:clothes_0} for $\re (s) > 1$ because of the identity theorem for holomorphic functions (and the discussion above). \par 
We can write a vector $\bfv \in \Z^2 \setminus \lbr \0 \rbr$ uniquely as $k \bfu$ where $(k, \bfu) \in \Z_+ \times \Z^2_\prim.$ If $p \nmid k,$ then $k \bfu \in \Lambda_p$ implies that $\bfu \in \Lambda_p$; and if $p \mid k,$ there is no restriction on the primitive vector $\bfu.$ Therefore
\begin{align*}
\epstein_p (s,z) 
&=
\sum_{\substack{k \geqslant 1 \\ p \nmid k}} \frac{1}{k^{2s}} 
\sum_{\bfu \in (\Lambda_p)_\prim} \frac{1}{Q_z (\bfu)^s}
+ \sum_{\substack{k \geqslant 1 \\ p \mid k}} \frac{1}{k^{2s}} 
\sum_{\bfu \in \Z_\prim^2} \frac{1}{Q_z (\bfu)^s}
\\ &=
2 \zeta(2s) \lllep 1 - \frac{1}{p^{2s}} \rrrip
E_\infty (s, z) + \frac{\epstein(s,z)}{p^{2s}} .
\end{align*}
This proves the first claim \eqref{eq:clothes_infty}. \par 
To prove the analogous identity \eqref{eq:clothes_0} for $E_0 (s,z),$ notice that we have the relation $Q_z (p v_1, v_2) = p Q_{pz} (v_1, v_2)$ for all $\bfv = (v_1, v_2) \in \Z^2.$ Then, by using that
\begin{align*}
2 \zeta(2s) E_\Gamma (s,z) = \epstein(s, z)
\end{align*}
for the level-one Eisenstein series $E_\Gamma (s,z),$ we can rewrite \eqref{eq:clothes_infty} as
\begin{align} \label{eq:rewrite_clothes_infty}
E_\infty (s, z)
=
\frac{p^s E_\Gamma (s, pz) - E_\Gamma (s,z)}{p^{2s}-1}.
\end{align}
Now, since the scaling matrix $\sigma_0$ normalizes $\Gamma_0 (p)$ and permutes the cusps $\lbr 0, \infty \rbr,$ we have $E_0 (s,z) = E_\infty (s, \sigma_0 z) = E_\infty (s, -1/pz)$; and since $E_\Gamma(s,z)$ is invariant under $z \mapsto S z = -1/z,$ the identity \eqref{eq:rewrite_clothes_infty} implies that
\begin{align} \label{eq:rerewrite_clothes_infty}
E_0 (s,z) 
=
\frac{p^s E_\Gamma (s,z) - E_\Gamma (s, pz)}{p^{2s} - 1}.
\end{align}
Multiplying \eqref{eq:rerewrite_clothes_infty} by $p^s$ and adding \eqref{eq:rewrite_clothes_infty}, we see that $E_\Gamma (s,z) = E_\infty (s,z) + p^s E_0 (s, z)$. Combining this with \eqref{eq:clothes_infty}, we obtain the identity \eqref{eq:clothes_0}.
\end{proof}

\section[\texorpdfstring{$L^2$}{L2}-bounds on the critical line]{$\boldsymbol{L^2}$-bounds on the critical line}
We now use Theorem \ref{thm:sublattice_epstein_approx_fcteq} to establish the promised $L^2$-bound of $\epstein(Q, \Lambda; s)$ on the critical line and prove Theorem \ref{thm:main_epstein_thm}. We follow and modify the argument used in the proof of \cite[Thm. 3]{KKL}.\\[+1em] 
Let us consider the first smooth truncation in the approximate functional equation for $\epstein (Q, \Lambda, \sfr12 + it),$ viz.
\begin{align} \label{def:S_first_half_of_approx_fcteq}
S \lep  Q, \Lambda; \tfrac12 + it \rip 
:=
\sumprime_{\bfv \in \Lambda} 
\frac{W_{t}^+ (Q(\bfv)/m)}{Q(\bfv)^{1/2 + it}}
\end{align}
which appears on the right-hand side of the estimate in Theorem \ref{thm:sublattice_epstein_approx_fcteq}. Then we can express the second term (aside from the factor $\chi_m$) as
\begin{align*}
S\lep Q, \Lambda; \tfrac12 - it \rip 
=
\overline{S\lep Q, \Lambda; \tfrac12 + it \rip},
\end{align*}
since the weights $W_t^\pm$ satisfy $\smash{\overline{W_t^\pm (y)}} = W_{-t}^\pm (y)$ (for all positive $y$). By the triangle inequality and the fact that $|\chi_m(\sfr12 + it)| = 1,$ we then obtain
\begin{align*}
\lev \epstein \lep Q, \Lambda ; \tfrac12 + it \rip \riv
\leqslant 
2 \lev S\lep Q, \Lambda; \tfrac12 - it \rip  \riv + \lev R_m \lep \tfrac12 + it \rip \riv.
\end{align*}
Thus, the inequality $(a+b)^2 \leqslant 2(a^2 + b^2)$ and our bound on the error term from Theorem \ref{thm:sublattice_epstein_approx_fcteq} imply that
\begin{align} \nonumber
\lev \epstein \lep Q, \Lambda ; \tfrac12 + it \rip \riv^2 
&\leqslant 
8 \lev S \lep Q, \Lambda, \tfrac12 + it \rip \riv^2  +
2 \lev R_m \lep \tfrac12 + it \rip \riv^2 
\\ &\ll \label{eq:epstein_critical_line_square_bound}
\lev S \lep Q, \Lambda, \tfrac12 + it \rip \riv^2  +
\frac{1}{m^{1/2} e^{t^2 / 2}}.
\end{align}
Our goal is now to bound the integral of \eqref{eq:epstein_critical_line_square_bound} over the interval $[-T, T]$ (or equivalently, over $[0,T]$ as $\smash{\overline{\epstein(Q, \Lambda; \sfr12 + it)}} = \epstein(Q, \Lambda; \sfr12 - it)$ for any $t \in \R$). To accomplish this, we decompose $[0, T]$ into dyadic intervals and consider different regimes corresponding to the range of $Q(\bfv) / Q(\bfw)$ separately.\\[+1em]
For the dyadic decomposition, we want to cover $[0,T]$ with a sequence of intervals of the form $[U, 2U]$. Let us first take $U \geqslant 2.$ If we fix a non-negative smooth function $f$ with compact support in $(\sfr12 , 3) \subseteq \R,$ with $f(t) \geqslant 1$ for $t \in [1,2],$ we certainly have
\begin{align} \nonumber
\int_U^{2U} \lev S\lep Q, \Lambda ; \tfrac12 + it \rip \riv^2 \, \dd t
&\leqslant 
\int_\R f \lep U^{-1} t \rip \lev S\lep Q, \Lambda ; \tfrac12 + it \rip \riv^2 \, \dd t
\bro &= \nonumber
\int_\R f \lep U^{-1} t \rip 
\llllev \sumprime_{\substack{\bfv \in \Lambda \\ \bfw \in \Lambda}} 
\frac{W_{t}^+ (m^{-1} Q(\bfv))\overline{W_{t}^+ (m^{-1} Q(\bfw))}}
{Q(\bfv)^{1/2 + it} Q(\bfw)^{1/2 - it}} \rrrriv
\, \dd t
\\ &\leqslant \label{eq:dyadic_integral_estimate}
\sumprime_{\substack{\bfv \in \Lambda \\ \bfw \in \Lambda}} 
\frac{|K_{U,f}(Q(\bfv), Q(\bfw))|}{\sqrt{Q(\bfv) Q(\bfw)}}
\end{align}
where we wrote $K_{U,f} (Q(\bfv), Q(\bfw))$ for the expression
\begin{align} \label{def:oscillatory_kernel_thing}
\int_\R f \lep U^{-1} t \rip 
\exp \lllep it \log \frac{Q(\bfw)}{Q(\bfv)} \rrrip
W_{t}^+ (m^{-1} Q(\bfv))\overline{W_{t}^+ (m^{-1} Q(\bfw))} \, \dd t.
\end{align}
This can be estimated with integration by parts and the first weight bound stated in Remark \ref{remark:weight_bounds}: Because of the compact support of $f,$ by integrating $\exp(it \cdots)$ and differentiating the rest of the integrand, we get
\begin{align} \label{eq:osc_kernel_upper_bound}
&\lev K_{U,f} \lep Q(\bfv), Q(\bfw) \rip\riv \nonumber
\\ &\qquad \ll_{A,j} 
U \lllep 1 + U \lllev \log \frac{Q(\bfw)}{Q(\bfv)} \rrriv \rrrip^{\! \! -j}
\! \lllep 1 + \frac{Q(\bfv)}{mU} \rrrip^{\! \! -A}
\! \lllep 1 + \frac{Q(\bfw)}{mU} \rrrip^{\! \! -A}
\end{align}
for any $A, j > 0$. \\[+1em]

For the upper bounds below, we may enlarge the sums of nonnegative terms to include all $\bfv,\bfw\in\Z^2\setminus\{0\}$. This removes the restriction to the sublattice $\Lambda$, while the index $m$ remains explicit in the weights.
\subsection[Estimates depending on the range of $Q(\bfv)/Q(\bfw)$]{Estimates depending on the range of $\boldsymbol{Q(\bfv)/Q(\bfw)}$}
We now analyze the contributions to \eqref{eq:dyadic_integral_estimate} from the different values of $Q(\bfv)/Q(\bfw)$. \\[+1em]
\noindent \underline{\textit{Case 1:} $Q(\bfv)/Q(\bfw) \not \in [\sfr12 , 2 ]$} \\[+0.5em]
Here we have $|\! \log Q(\bfv) / Q(\bfw) | \geqslant \log 2 \gg 1$. If we take $A > \sfr12$ in \eqref{eq:osc_kernel_upper_bound}, we see that the contribution is
\begin{align} \label{eq:case_1_contribution}
&\ll_{Q,A,j} \nonumber
U^{1-j} \sumprime_{\substack{\bfv \in \Lambda \\ \bfw \in \Lambda}} 
\frac{1}{\sqrt{Q(\bfv) Q(\bfw)}}
\lllep 1 + \frac{Q(\bfv)}{mU} \rrrip^{\! \! -A}
\! \lllep 1 + \frac{Q(\bfw)}{mU} \rrrip^{\! \! -A}
\\ &= \nonumber
U^{1-j}
\llllep \sumprime_{\substack{\bfv \in \Lambda}} 
\frac{1}{\sqrt{Q(\bfv)}}
\lllep 1 + \frac{Q(\bfv)}{mU} \rrrip^{\! \! -A} \rrrrip^{\! 2}
\bre &\ll_{Q, A}
m U^{2-j},
\end{align}
where we used Lemma \ref{lemma:estimate_this_maaan} with $Y = m U$ in the last step. \\[+1em]
\underline{\textit{Case 2:} $Q(\bfv)/Q(\bfw) \in [\sfr12 , 2 ]$ for large values of $Q(\bfv), Q(\bfw)$} \\[+0.5em]
In this situation there is $X \geqslant 1$ with $Q(\bfv), Q(\bfw) \asymp X$. Then, with the Taylor expansion of the natural logarithm we see that
\begin{align*}
\lllev \log \frac{Q(\bfw)}{Q(\bfv)} \rrriv 
\asymp 
\frac{Q(\bfv)}{X} \cdot \lllev \frac{Q(\bfw)}{Q(\bfv)} - 1 \rrriv
=
\frac{|Q(\bfw) - Q(\bfv)|}{X}.
\end{align*}
As $\smrt{Q(\bfv) Q(\bfw)} \gg X$, inserting the above into \eqref{eq:osc_kernel_upper_bound} shows that the contribution is 
\begin{align} \label{eq:case_2_contribution}
&\ll_{A, j}
\frac{U}{X}
\lllep 1 + \frac{X}{mU} \rrrip^{\! \! -2A}
\sumprime_{\substack{\bfv \in \Lambda \\ \bfw \in \Lambda}} 
\one_{\calB(X)} \lep Q(\bfv), Q(\bfw) \rip
\lllep 1 + U \cdot \frac{|Q(\bfw) - Q(\bfv)|}{X} \rrrip^{\! \! -j}
\end{align}
where $\calB(X)$ is the set of all $(\alpha , \beta) \in \R^2_+$ with $\max (\alpha , \beta) \in (X, 2X]$ and $\alpha \beta^{-1} \in [\sfr12 , 2]$. The summands above lie in $[1/2^j, 1]$ when $(\bfv , \bfw) \in V_Q (X, X U^{-1}).$ The number of such pairs is, in turn, provided by Lemma \ref{lemma:counting_jointly_bounded_values}, which applies in the current situation (by hypothesis): 
\begin{align*}
\# V_Q \llep X, \tfrac{X}{U} \rrip \ll_Q X \lllep 1 + \frac{X}{U}  \rrrip,
\end{align*}
and thus the contribution to \eqref{eq:case_2_contribution} from the pairs $(\bfv , \bfw) \in V_Q (X, X U^{-1})$ is
\begin{align} \label{case_2_subcontribution_central}
\ll_{Q} (X + U)
\lllep 1 + \frac{X}{m U} \rrrip^{\! \! -2A}.
\end{align}
Now, let $k \geqslant 1$ and consider the pairs $(\bfv, \bfw)$ with
\begin{align*}
\frac{2^{k-1} X}{U} 
<
\lev Q(\bfw ) - Q(\bfv) \riv 
\leqslant 
\frac{2^{k} X}{U}.
\end{align*}
These pairs form a subset of $V_Q (X, 2^k X U^{-1}),$ so by Lemma \ref{lemma:counting_jointly_bounded_values} once again, the contribution of these to \eqref{eq:case_2_contribution} is
\begin{align*}
&\ll_{Q} \nonumber
\frac{U}{X} \lllep 1 + \frac{X}{m U} \rrrip^{\! \! -2A}
\lllep \frac{1}{1 + 2^{k-1}} \rrrip^{\! j} X \lllep 1 + \frac{2^k X}{U} \rrrip
\\ &\ll
\lllep 1 + \frac{X}{m U} \rrrip^{\! \! -2A} \lllep \frac{U}{2^{jk}} + \frac{X}{2^{k (j-1)}} \rrrip.
\end{align*}
Summing the right-hand side over all $k \geqslant 1,$ we obtain the same upper bound as in \eqref{case_2_subcontribution_central} (on the condition that $j \geqslant 2$, to make both geometric series convergent).\par 
In conclusion, the total contribution to \eqref{eq:dyadic_integral_estimate} from the "large" pairs $Q(\bfv), Q(\bfw)$ with $Q(\bfv) \asymp Q(\bfw)$ is 
\begin{align} \label{case_2_total_contribution}
\sum_{\substack{X = \hspace{0.05em} 2^\ell \\ \ell \hspace{0.1em} \geqslant \hspace{0.1em} 0}} 
(X + U ) \lllep 1 + \frac{X}{mU} \rrrip^{\! \! -2A}
\ll_A
U \log (2 m U) + mU
\end{align}
since $A > \sfr12.$ \\[+1em]
\underline{\textit{Case 3:} $Q(\bfv)/Q(\bfw) \in [\sfr12 , 2 ]$ for small values of $Q(\bfv), Q(\bfw)$} \\[+0.5em]
In this case, we have $Q(\bfv), Q(\bfw) \asymp X$ for some $X < 1.$ By \eqref{eq:osc_kernel_upper_bound} and an estimate similar to \eqref{eq:case_2_contribution}, we see that the contribution from these pairs is at most $\ll_Q U.$ 
\subsection{Finishing the proof of Theorem \ref{thm:main_epstein_thm}}
Assembling all the individual contributions from the three cases considered above (namely \eqref{eq:case_1_contribution}, \eqref{case_2_total_contribution}, and the contribution $O_Q (U)$ from \textit{Case 3}), we conclude from \eqref{eq:dyadic_integral_estimate} that 
\begin{align} \label{eq:Sfct_dyadic_L2_bound}
\int_U^{2U} \lev S\lep Q, \Lambda ; \tfrac12 + it \rip \riv^2 \, \dd t
&\ll_{Q} 
U \log (2 m U) + mU.
\end{align} 
Consequently, the estimate \eqref{eq:epstein_critical_line_square_bound} and the exponential decay of the second term (coming from $|R_m (\sfr12 + it)|^2$) give the preliminary $L^2$-bound
\begin{align} \label{eq:epstein_dyadic_L2_bound}
\int_U^{2U} \lev Z\lep Q, \Lambda ; \tfrac12 + it \rip \riv^2 \, \dd t
&\ll_{Q} 
U \log (2 m U) + mU
\qquad (\text{for } U \geqslant 2).
\end{align}
This already gives an $L^2$-bound for the Epstein zeta function over $\sfr12 + i[2, T]$, as we can deduce that 
\begin{align} \label{eq:epstein_almost_full_L2_bound}
\int_2^{T} \lev Z\lep Q, \Lambda ; \tfrac12 + it \rip \riv^2 \, \dd t
&\ll_{Q} 
T \log (2 m T) + mT
\end{align}
by taking $U = 2^\ell$ and summing \eqref{eq:epstein_dyadic_L2_bound} over $1 \leqslant \ell \leqslant \log_2 T$.\par 
To handle the lower positive end of the critical line $(0 \leqslant t \leqslant 2)$, we can settle for a pointwise estimate of the function $S(Q, \Lambda; \sfr12 + it)$. Indeed, the triangle inequality and the second weight bound from Remark \ref{remark:weight_bounds} show that
\begin{align*}
\llev S \lep Q, \Lambda; \tfrac12 + it \rip \rriv 
&\leqslant 
\sumprime_{\bfv \in \Lambda} \frac{|W_t^+ (Q(\bfv)/m)|}{\sqrt{Q(\bfv)}}
\ll_A
\sumprime_{\bfv \in \Lambda} \frac{1}{\sqrt{Q(\bfv)}}
\lllep 1 + \frac{Q(\bfv)}{m} \rrrip^{\! \! -A}.
\end{align*}
Then Lemma \ref{lemma:estimate_this_maaan} bounds the sum on the right-hand side as $\ll_A \smrt{m}.$ Choosing a concrete value of $A > \sfr12$, squaring and integrating the above estimate over $\sfr12 + i[0,2],$ and combining the result with \eqref{eq:epstein_almost_full_L2_bound}, we conclude that
\begin{align} \label{eq:epstein_full_positive_L2_bound}
\int_0^{T} \lev Z\lep Q, \Lambda ; \tfrac12 + it \rip \riv^2 \, \dd t
&\ll_{Q} 
T \log (2 m T) + mT
\ll
T (\log T + m).
\end{align}
\par As we noted above, it is sufficient to establish the $L^2$-estimate over the positive critical line, since $\epstein(Q, \Lambda; s)$ is real-valued for $s \in \R.$ The proof of Theorem \ref{thm:main_epstein_thm} is therefore finished.
\subsection[$L^2$-bounds for $E_{\mathfrak{c}}(s,z)$]{$\boldsymbol{L^2}$-bounds for $\boldsymbol{E_{\mathfrak{c}} (s,z)}$}
We now complete the proof of Theorem \ref{thm:main_thm_for_levelp_EWAS}. 

Let $z \in \pi^{-1}(\Gamma w) \in \Gamma_0 (p) \backslash \mathbb{H}$ be arbitrary. Then we can write $z = \gamma w$ for some $\gamma \in \Gamma.$ By Lemma \ref{lemma:clothesline}, we can write our Eisenstein series $E_\infty (s,z)$ and $E_0 (s,z)$ in terms of sublattice Epstein zeta functions for the quadratic form $Q_z = Q_{\gamma w}$; and by using Lemma \ref{lemma:level_in_sublattice} to move $\gamma$ to the sublattice $\Lambda_p,$ we therefore have
\begin{align} \label{eq:functions_A_and_B}
E_\infty (s,z) 
=
\frac{A(s)-p^{-2s}B(s)}{2 \zeta(2s) D_p (s)},
\qquad 
E_0 (s,z)
=
\frac{1}{p^s} \frac{B(s)-A(s)}{2 \zeta(2s) D_p (s)},
\end{align}
where we wrote $\Lambda_\gamma = \gamma^\intercal (p \mathbb{Z} \times \mathbb{Z})$ (as in Lemma \ref{lemma:level_in_sublattice}) and
\begin{align*} \quad
A(s) := {\mathrm{Z}}(Q_w, \Lambda_\gamma; s),
\qquad 
B(s) := {\mathrm{Z}} (Q_w, \mathbb{Z}^2; s),
\qquad 
D_p(s) := 1-p^{-2s}
\end{align*}
to ease the notation. 

The triangle inequality shows that $|D_p (\sfr12 + it)| \geqslant \sfr12$. Additionally, the estimate $|a+b|^2 \leqslant 2 (|a|^2 + |b|^2)$ allows us to bound the numerators in \eqref{eq:functions_A_and_B}; thus, for an absolute implied constant, we obtain the estimate
\begin{align} \label{eq:Eisenstein_pointwise_bound_on_critical_line}
\nonumber &\big| E_\infty \bigl( \tfrac12 + it,z \bigr) \big|^2 + 
p \big| E_0 \bigl( \tfrac12 + it,z \bigr) \big|^2
\\[+0.8em] &\qquad = \nonumber 
\frac{| A (\sfr12 + it ) - p^{-1-2it} B(\sfr12 + it) |^2 + | B (\sfr12 + it ) -A(\sfr12 + it) |^2}
{4 |\zeta(1 + 2it)|^2 |D_p (\sfr12 + it)|^2}
\\ &\qquad \ll 
\frac{\big| A (\sfr12 + it ) \big|^2 + (1 + p^{-2}) \big| B(\sfr12 + it) \big|^2}
{|\zeta(1 + 2it)|^2}.
\end{align}
Using Korobov's bound \cite{korobov}
\begin{align} \label{eq:Korobov_bound}
\frac{1}{\zeta(1+it)} \ll (\log t)^{2/3} (\log \log t)^{1/3} \qquad 
(\text{for } t \gg 1),
\end{align}
we conclude from \eqref{eq:Eisenstein_pointwise_bound_on_critical_line} that, for any $|t| \leqslant T,$ 
\begin{align} \label{eq:Eisenstein_improved_pointwise_bound}
\nonumber 
&\big| E_\infty \bigl( \tfrac12 + it,z \bigr) \big|^2 + 
p \big| E_0 \bigl( \tfrac12 + it,z \bigr) \big|^2
\\ &\qquad \ll
\llep \big| A (\sfr12 + it ) \big|^2 + (1 + p^{-2}) \big| B(\sfr12 + it) \big|^2 \rrip 
(\log T)^{4/3} ( \log \log T )^{2/3}.
\end{align}
Integrating the estimate \eqref{eq:Eisenstein_improved_pointwise_bound}, we can use Theorem \ref{thm:main_epstein_thm} with $m = 1$ and $m = p$, respectively, to conclude that
\begin{align*} \quad
&\int_{-T}^T \Big| E_\infty \bigl( \tfrac12 + it , z \bigr) \Big|^2 \, \mathrm{d} t +
p \int_{-T}^T \Big| E_0 \bigl( \tfrac12 + it , z \bigr) \Big|^2 \, \mathrm{d} t
\\ &\qquad \ll_w
\Bigl( T \bigl( p + \log T \bigr) + T \bigl( 1 + \log T \bigr) \Bigr)
(\log T)^{4/3} ( \log \log T )^{2/3}
\\[+0.5em] &\qquad \ll
T \bigl( p + \log T \bigr) (\log T)^{4/3} ( \log \log T )^{2/3}.
\end{align*} 
The implied constants are independent of $\gamma$ since this is the case for the implied constant in Theorem \ref{thm:main_epstein_thm}, and therefore independent of $z.$ This completes the proof of Theorem \ref{thm:main_thm_for_levelp_EWAS}.\\[-0.3em]

Finally, we prove the uniform version stated in Corollary \ref{cor:uniform_version}. 
\begin{proof}[Proof of Corollary \ref{cor:uniform_version}]
It is enough to show that the hypotheses imply uniform versions of the estimates in the subsequent argument that establishes Theorem \ref{thm:main_thm_for_levelp_EWAS}. \par 
First of all, the implied constant in Lemma \ref{lemma:estimate_this_maaan} is uniform in $Q_w$ when $Q_w$ ranges over the compact set $\lbr Q_w \mid w \in K \rbr$. Indeed, this constant only depends on $Q$ via the estimate $\# \lbr \bfv \in \Z^2 \mid 0 < Q(\bfv) \leqslant X \rbr \ll_Q X;$ an explicit upper bound on the left-hand side is given by $c_Q X$ (where $c_Q^{-1} > 0$ is the smallest eigenvalue of $A_Q$), and the claimed uniformity follows. \par 
The assumption $C_K < \infty$ clearly shows that the implied constant in the first part of Lemma \ref{lemma:counting_jointly_bounded_values} is uniform in $Q_w$ when $w \in K.$ Accordingly, the same is true for the implied constant in the second part of this lemma. Since all remaining estimates needed for the proof of Theorem \ref{thm:main_epstein_thm} and the following deduction of Theorem \ref{thm:main_thm_for_levelp_EWAS} are absolute, the proof is completed.
\end{proof} 

\section*{Acknowledgements}
\phantomsection
\addcontentsline{toc}{section}{Acknowledgements}
JB would like to thank the Department of Mathematics of  Christian-Albrechts-Universit\"at zu Kiel and the Hebrew University of Jerusalem (HUJI) for their support and excellent working conditions. In addition, JB was supported by the European Research Council (ERC) under the European Union’s Horizon Europe research and innovation programme (grant agreement No. 101163794, GroupHype). KH would like to thank Valentin Blomer, Anders Södergren, and Simon Kristensen for helpful conversations.

\nocite{}
\bibliographystyle{abbrv}
\phantomsection
\addcontentsline{toc}{section}{References}    

\bibliography{bh}

\end{document}